\documentclass{amsart}

\usepackage{tikz}
\usepackage{graphicx}	
\usepackage{amssymb}
\usepackage{amsmath}
\usepackage{amsthm, amsfonts, mathrsfs}

\newtheorem{theorem}{Theorem}
\newtheorem{lemma}[theorem]{Lemma}
\newtheorem{cor}[theorem]{Corollary}

\newtheorem{ques}{Question}

\newtheorem{exam}[theorem]{Example}

\title{On Vietoris-Rips complexes of Finite Metric Spaces with Scale $2$}
\date{January 2023}
\author[Z. Feng]{Ziqin Feng}
\address{Department of Mathematics and Statistics\\Auburn University\\Auburn, AL~36849}
\email{zzf0006@auburn.edu}

\author[P. Nukala]{Naga Chandra Padmini Nukala}
\address{Department of Mathematics and Statistics\\Auburn University\\Auburn, AL~36849}
\email{nzn0025@auburn.edu}
\date{\today}

\keywords{Vietories-Rips Complexes, Simplicial Complexes, Homotopy Types, Kneser Graphs, Hypercube Graphs}

\subjclass[2020]{05E45, 55P10, 55N31}
\begin{document}
\begin{abstract} We examine the homotopy types of Vietoris-Rips complexes on certain finite metric spaces at scale $2$. We consider the collections of subsets of $[m]=\{1, 2, \ldots, m\}$ equipped with symmetric difference metric $d$, specifically, $\mathcal{F}^m_n$, $\mathcal{F}_n^m\cup \mathcal{F}^m_{n+1}$, $\mathcal{F}_n^m\cup \mathcal{F}^m_{n+2}$, and $\mathcal{F}_{\preceq A}^m$. Here $\mathcal{F}^m_n$ is the collection of size $n$ subsets of $[m]$  and $\mathcal{F}_{\preceq A}^m$ is the collection of subsets $\preceq A$ where $\preceq$ is a total order on the collections of subsets of $[m]$  and $A\subseteq [m]$ (see the definition of $\preceq$ in Section~\ref{Intro}). We prove that the Vietoris-Rips complexes $\mathcal{VR}(\mathcal{F}^m_n, 2)$ and $\mathcal{VR}(\mathcal{F}_n^m\cup \mathcal{F}^m_{n+1}, 2)$ are either contractible or homotopy equivalent to a wedge sum of $S^2$'s; also,  the complexes $\mathcal{VR}(\mathcal{F}_n^m\cup \mathcal{F}^m_{n+2}, 2)$ and  $\mathcal{VR}(\mathcal{F}_{\preceq A}^m, 2)$ are either contractible or homotopy equivalent to a wedge sum of $S^3$'s. We provide inductive formulae for these homotopy types extending the result of Barmak in \cite{Bar13} about the independence complexes of Kneser graphs \text{KG}$_{2, k}$ and the result of Adamaszek and Adams in \cite{AA22} about Vietoris-Rips complexes of  hypercube graphs with scale $2$.

\end{abstract}
\maketitle

\section{Introduction}\label{Intro}

Along with the development of topological data analysis \cite{RG08, GC09}, determining the homotopy types of Vietoris-Rips complex of finite metric spaces has become crucial in applied topology. In fact, the idea behind persistent homology is to compute the (co)homology of a Vietoris-Rips complex filtration built on data, which is typically a finite metric space in high dimensions (\cite{UB21}).
 Vietoris-Rips complexes were introduced by Vietoris in \cite{VI27} and then by Rips (see \cite{EG87}) to approximate a metric space at a chosen scale for different purposes. Additionally, these kinds of complexes have been intensively used in computational topology as a simplical model for the sensor networks (\cite{GM05, SG06, SG07}) and as a tool for image processing (\cite{SMC07}).

The Vietoris-Rips complex $\mathcal{VR}(X;r)$ of a metric space $(X,d)$ with scale $r\geq 0$ is a simplicial complex with vertex set $X$, where a nonempty subset $\sigma\in [X]^{<\infty}$ is a simplex in $\mathcal{VR}(X;r)$ if and only if its diameter satisfies $\text{diam}(\sigma)\leq r$. Here, $[X]^{<\infty}$ denotes the collection of all finite subsets of $X$, and for any subset $S$ of $X$ $\text{diam}(S)$ is defined as the supremum of all distances $d(x,y)$ between pairs of points $x,y\in S$. Recent work has focused on studying Vietoris-Rips complexes of circles (\cite{AA17}), metric graphs (\cite{GGPSWWZ18}), geodesic spaces (\cite{ZV19, ZV20}), and more.

In this paper, we investigate the homotopy type of the Vietoris-Rips complex $\mathcal{VR}(\mathcal{F}, 2)$ of a specific class of finite metric spaces with scale $2$. Let $\mathcal{F}$ be a collection of subsets of $[m]$ for some $m\in \mathbb{N}$, where $[m]=\{1,2,\ldots,m\}$. We define a metric $d$ on $\mathcal{F}$ such that, for any $A$ and $B$ in $\mathcal{F}$, $d(A, B)=|A\Delta B|$, where $A\Delta B$ denotes the symmetric difference of $A$ and $B$, i.e., $(A\setminus B)\cup (B\setminus A)$. Hence, $(\mathcal{F}, d)$ is a finite metric space. In this paper, we study the homotopy types of the Vietoris-Rips compelxes, $\mathcal{VR}(\mathcal{F}_n^m, 2)$ (Section~\ref{F_n_m}), $\mathcal{VR}(\mathcal{F}_{\preceq A}^m, 2)$ (Section~\ref{F_A_m}), and $\mathcal{VR}(\mathcal{F}_{n}^m\cup \mathcal{F}_{n'}^m, 2)$ (Section~\ref{F_p_q_m}), where $\mathcal{F}_n^m$, $\mathcal{F}_{\preceq A}^m$, and $\mathcal{F}_{n}^m\cup \mathcal{F}_{n'}^m$ are all collections of subsets of $[m]$.  We show that:

\begin{itemize}
\item[i)] the complexes $\mathcal{VR}(\mathcal{F}_n^m, 2)$ and $\mathcal{VR}(\mathcal{F}_{n}^m\cup \mathcal{F}_{n+1}^m, 2)$ are either contractible or homotopy equivalent to a wedge sum of $S^2$'s;

\item[ii)] the complexes $\mathcal{VR}(\mathcal{F}_{\preceq A}^m, 2)$  and $\mathcal{VR}(\mathcal{F}_{n}^m\cup \mathcal{F}_{n+2}^m, 2)$ are either contractible or homotopy equivalent to a wedge sum of $S^3$'s.
\end{itemize}
Furthermore, we identify inductive formulas for determining the homotopy types of these complexes. The homotopy type of $\mathcal{VR}(\mathcal{F}, r)$ for $r\geq 0$ is closely related to the study of the independence complex of Kneser graphs in \cite{Bar13} and the Vietoris-Rips complexes of hypercube graphs in \cite{AA22}.

The independence complex I$(G)$ of a graph $G=(V(G), E(G))$ is a simplicial complex whose simplices are the independent sets of vertices of $G$, i.e., sets of vertices no two of which are adjacent. The Kneser graph $\text{KG}_{n, k}$ has the $n$-subsets of $[2n+k]$ as its vertices  and its edges are given by pairs of disjoint such subsets. In particular, any two vertices in $\text{KG}_{n, k}$ are not disjoint if and only if their symmetric difference distance is at most $2n-1$. Therefore, the independence complex of $\text{KG}_{n, k}$ is identical to the Vietoris-Rips complex $\mathcal{VR}(\mathcal{F}_n^{2n+k}, 2n-1)$, where $\mathcal{F}_n^{m}$ denotes the collection of all $n$-subsets of $[m]$.

Barmak proved in \cite{Bar13} (Theorem 4.11) that the independence complex of $\text{KG}_{2, k}$, I($\text{KG}_{2, k}$),  is homotopy equivalent to $\bigvee_{k+3\choose 3} S^2$. For any $m\geq 4$, note that $\mathcal{VR}(\mathcal{F}_2^m, 2)=\mathcal{VR}(\mathcal{F}_2^m, 3)=\text{I}(\text{KG}_{2, m-4})$; thus, the  complex $\mathcal{VR}(\mathcal{F}_{2}^{m}, 2)$ is homotopy equivalent to a wedge sum of ${m-1\choose 3}$ copies of $S^2$. Our result on the homotopy types of $\mathcal{VR}(\mathcal{F}_{n}^{m}, 2)$ (Corollary~\ref{homot_n_2}) is a generalization of Barmak's result.  When $m=2n$, the complex $\mathcal{VR}(\mathcal{F}_n^{m}, m-2)$ has ${m\choose n}$ vertices and is the boundary of a cross-polytope, so it is homotopy equivalent to $S^{\frac{1}{2}{m\choose n}-1}$.

The hypercube graph is a graph whose vertices are all binary strings of length $m$, denoted by $Q_m$, and whose edges are given by pairs of such strings with Hamming distance $1$. The Hamming distance between any two binary strings with the same length is defined as the number of positions in which their entries differ. We can consider $Q_m$ as a metric space equipped with the Hamming distance, and then the hypercube graph can be identified as the complex $\mathcal{VR}(Q_m, 1)$.

Adamaszek and Adams investigated the Vietoris-Rips complexes $\mathcal{VR}(Q_m, r)$ at small scales $r=0,1,2$ in their recent work \cite{AA22}. The complex $\mathcal{VR}(Q_m, 0)$ is homotopy equivalent to a wedge sum of $(2^m-1)$-many $S^0$'s, and $\mathcal{VR}(Q_m, 1)$ is homotopy equivalent to a wedge sum of $((m-2)2^{m-1}+1)$-many $S^1$'s. Their main result is that the complex $\mathcal{VR}(Q_m, 2)$ is homotopy equivalent to a wedge sum of $c_m$ copies of $S^3$'s, where $c_m$ is given by $c_m=\sum_{0\leq j<i<m}(j+1)(2^{m-2}-2^{i-1})$. The \v{C}ech complexes of the metric space $Q_m$ with scales $2$ and $3$ are studied in \cite{ASS22}.

Each binary string of length $m$ can also be considered as the characteristic function of a subset of $[m]$. Hence, there is a natural isometric map between the metric spaces $Q_m$ and $\mathcal{P}([m])$, where $\mathcal{P}([m])$ is the collection of all subsets of $[m]$ equipped with the symmetric difference metric $d$. Notice that $\mathcal{P}([m])$ contains the empty set $\emptyset$ as an element. Hence the result about the homotopy type of $\mathcal{VR}(Q_m, 2)$ by Adamaszek and Adams is a special case of Theorem~\ref{allpowersets} which gives a deeper understanding on how its homotopy type is formed.    Adamaszek and Adams in \cite{AA22} used Polymake \cite{Poly10} and Ripser++ \cite{Zhang} to compute the reduced homology groups of $\mathcal{VR}(\mathcal{P}[m], 3)$ for $m=5, 6, \ldots, 9$, with coefficients $\mathbb{Z}$ or $\mathbb{Z}/2\mathbb{Z}$. They found that these homology groups are nontrivial only in dimensions $4$ and $7$, indicating that the complex $\mathcal{VR}(\mathcal{P}[m], 3)$ is a wedge sum of copies of $S^4$'s and $S^7$'s. This suggests that the homotopy type of the complex $\mathcal{VR}(\mathcal{P}[m], 3)$ is more complicated than that of the complexes $\mathcal{VR}(\mathcal{P}[m], r)$ with $r=0, 1, 2$. Shukla \cite{Shu22} subsequently proved that for $m\geq 5$, the reduced homology group $\tilde{H}_i(\mathcal{VR}(\mathcal{P}([m]), 3))$ is nontrivial if and only if $i\in \{4, 7\}$.

\medskip

In this paper, we extend the study of Vietoris-Rips complexes to other collections of subsets in $[m]$ with scale $2$ beyond $\mathcal{F}_{2}^m$ and $\mathcal{P}[m]$.  To determine the homotopy type of $\mathcal{VF}(\mathcal{P}[m], 2)$, Adamaszek and Adams in \cite{AA22} used an inductive proof on the clique complex of the graph $G_\ell^2$, whose vertices are binary sequences of non-negative integers $\leq \ell-1$ with edges given by pairs of sequences with Hamming distance $\leq 2$. We adopt a different inductive process to study these complexes and our approach is also potentially applicable to the investigation of these complexes at larger scales.

We start with introducing notations for certain collections of subsets of $[m]$. For $n\leq m$,  let $\mathcal{F}_{\leq n}^m$ be the collection of all subsets of $[m]$ with cardinality $\leq n$. It is easy to see that the complex $\mathcal{VR}(\mathcal{F}^m_{\leq r}, r)$ is contractible since it is a cone with the cone vertex being the empty set $\emptyset$.  We now proceed to define a total ordering $\prec$ on $\mathcal{P}([m])$ to facilitate the conduction of induction process.  For each $A\subseteq [m]$ with $|A|=n$, we represent $A=\{i_1, i_2, \ldots, i_n\}$ as $i_1i_2\cdots i_n$ with $i_1<i_2<\cdots<i_n$. For any $A, B\subseteq  [m]$,  we say $A\prec B$ if one of the followings holds:
\begin{itemize}
\item[i)] $|A|<|B|$;

\item[ii)] there is a $k\in\mathbb{N}$ such that $i_k<j_k$ and $i_{\ell}=j_{\ell}$ for any $\ell<k$, when $n=|A|=|B|$, $A=i_1i_2\cdots i_n$ and $B=j_1j_2\cdots j_n$.

\end{itemize}
Clearly this is a total order on $\mathcal{P}([m])$ and for any subcollection $\mathcal{F}$ of $\mathcal{P}([m])$, $(\mathcal{F}, \prec)$ is also a total order. For any $A\subset [m]$, we denote $\mathcal{F}^m_{\prec A}=\{B: B\prec A \text{ and } B\subset [m]\}$ and $\mathcal{F}^m_{\preceq A}=\mathcal{F}^m_{\prec A}\cup \{A\}$. Notice that the set $[m]$ is the maximal elements in $\mathcal{P}([m])$; hence if $A=[m]$, $\mathcal{F}_{\preceq A}^m =\mathcal{P}([m])$.

We start with some easy observations of the homotopy types of such complexes. For any collection $\mathcal{F}$ of subsets of $[m]$, $\mathcal{VR}(\mathcal{F}, 0)$ is  a complex with $|\mathcal{F}|$-many disjoint vertices. Also for any $1\leq n\leq m-1$, $\mathcal{VR}(\mathcal{F}_n^m, 1)$ is also the space of ${m\choose n}$ disjoint vertices since $d(A, B)\geq 2$ for any two different subsets $A, B$ with cardinality $n$. Also for each $i=0, 1, \ldots, m$, the metric space $\mathcal{F}_i^m$ is isometric to $\mathcal{F}_{m-i}^m$ since the complementary mapping with $\phi(A)=[m]\setminus A$ preserves the symmetric distance from $\mathcal{F}_i^m$ to $\mathcal{F}_{m-i}^m$. Therefore $\mathcal{VR}(\mathcal{F}_i^m, r)$ is homotopy equivalent to $\mathcal{VR}(\mathcal{F}_{m-i}^m, r)$ for each $r\geq 0$.   We see that the complexes $\mathcal{VR}(\mathcal{F}^m_1, 2)$ and $\mathcal{VR}(\mathcal{F}^m_{m-1}, 2)$ are contractible because each pair of their vertices has distance $2$. Hence the complex $\mathcal{VR}(\mathcal{F}^m_n, 2)$ is contractible when $n=0, 1, m-1,$ or $m$. Similarly the complexes $\mathcal{VR}(\mathcal{F}_{n}^m\cup \mathcal{F}_{n+1}^m, 2)$ is contractible if $n=0$ or $ m-1$.

\section{Notations and Preliminary Results}\label{prel}

\textbf{Topological Spaces and Wedge sums.} Let $X$ and $Y$ be topological spaces. We write $X\simeq Y$ when they are homotopy equivalent. We denote $S^k$ to be the $k$-dimensional sphere. The wedge sum of $X$ and $Y$, $X\vee Y$, is the space obtained by gluing $X$ and $Y$ together at a single point. The homotopy type of $X\vee Y$ is independent of the choice of points if $X$ and $Y$ are connected CW complexes. For $k\ge 1$, $\bigvee_k X$ denotes the $k$-fold wedge sum of $X$.  We denote $\Sigma X$ to be the suspension of $X$. For any sphere $S^k$, $\Sigma S^k$ is homeomorphic to $S^{k+1}$. A function $f$ from $X$ to $Y$ is said to be null-homotopic if it is homotopic to a constant map. It is well-known that any mapping from $S^n$ to $S^m$ is null-homotopic when $n<m$.

Any two metric spaces $(X, d_X)$ and $(Y, d_Y)$ are said to be isometric if there is a bijective distance-preserving map $f$ from $X$ to $Y$, i.e., $d_X(x_1, x_2)=d_Y(f(x_1), f(x_2))$ for any $x_1, x_2\in X$. Hence if $X$ and $Y$ are isometric, then it is straightforward to verify that  $\mathcal{VR}(X, r)$ is homeomorphic to $\mathcal{VR}(Y, r)$ for any $r\geq 0$.

A cross-polytope with $2n$ vertices is a regular, convex polytope that exists in $n$-dimensional Euclidean space. So it homeomorphic to the unit ball in $\mathbb{R}^n$ whose boundary is homeomorphic to $S^{n-1}$.

\medskip

\textbf{Simplicial complexes.} A simplicial complex $K$ on a vertex set $V$ is a collection of non-empty subsets of $V$ such that: i) all singletons are in $K$; and ii) if $\sigma\in K$ and $\tau\subset \sigma$, then $\tau\in K$. For a complex $K$, we use $K^{(k)}$ to represent the $k$-skeleton of $K$, which is a subcomplex of $K$. For vertices $v_1, v_2, \ldots, v_k$ in a complex $K$,  if they span a simplex in $K$, then we denote the simplex to be $\{v_1, v_2, \ldots, v_k\}$. If $\sigma$ and $\tau$ are simplices in $K$ with $\sigma\subset \tau$, we say $\sigma$ is a face of $\tau$.  We say a simplex is a maximal simplex (or a facet) if it is not a face of any other simplex. We say that $L$ is a full subcomplex of $K$ if it contains all the simplicies in $K$ spanned by the vertices in $L$.

A complex $K$ is \emph{a clique complex} if  the following condition holds: a non-empty subset $\sigma$ of vertices is in $K$ given that the edge $\{v, w\}$ is in  $K$ for any pair $v, w\in \sigma$. For any graph $G=(V, E)$, we denote Cl$(G)$ to be the clique complex of $G$ whose vertex set is $V$ and Cl$(G)$ contains a finite subset $\sigma\subset V$ as a simplex if each pair of vertices in $\sigma$ forms an edge in $G$. Also, we see that the Vietoris-Rips complex over any metric space is a clique complex by the definition.

Let $L$ be a complex and $v$ be a vertex not in $L$. The cone over $L$ with the vertex $v$, denoted by $v\ast L$, is a simplical complex defined on the vertex set $L^{(0)}\cup \{v\}$ such that a simplex of $v\ast L$ is either a simplex in $L$ or a simplex in $L$ adjoined with $v$. Notice that any cone is contractible.

\medskip

The following result is proved in \cite{GSS22}. This is an important method to investigate the homotopy type of a complex by splitting it into two or more subcomplexes.

\begin{lemma}\label{cup_simp} Let $K$ be a simplical complex. Suppose that  $K=K_1\cup K_2$ and the inclusion maps $\imath_1: K_1\cap K_2\rightarrow K_1$ and $\imath_2: K_1\cap K_2\rightarrow K_2$ are both null-homotopic. Then $$K\simeq K_1\vee K_2\vee\Sigma(K_1\cap K_2).$$
\end{lemma}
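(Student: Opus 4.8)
The plan is to recognize $K$ as a homotopy pushout and then exploit the null-homotopy hypotheses through the homotopy invariance of that pushout. Since $K_1\cap K_2$ is a subcomplex of both $K_1$ and $K_2$, each inclusion $\imath_j$ is a cofibration, and hence the honest pushout $K=K_1\cup_{K_1\cap K_2}K_2$ computes the homotopy pushout of the diagram $K_1\xleftarrow{\imath_1}K_1\cap K_2\xrightarrow{\imath_2}K_2$. Concretely, $K$ is homotopy equivalent to the double mapping cylinder
$$M(\imath_1,\imath_2)=\Big(K_1\sqcup\big((K_1\cap K_2)\times[0,1]\big)\sqcup K_2\Big)\big/\!\sim,$$
where $(c,0)\sim\imath_1(c)$ and $(c,1)\sim\imath_2(c)$ for each $c\in K_1\cap K_2$.

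The key step is homotopy invariance: the homotopy type of the double mapping cylinder depends only on the homotopy classes of the two legs. Because $\imath_1$ and $\imath_2$ are null-homotopic, I would replace them by constant maps $c_{a_0}$ and $c_{b_0}$ onto chosen vertices $a_0\in K_1$ and $b_0\in K_2$, obtaining $K\simeq M(c_{a_0},c_{b_0})$. In this cylinder the slab $(K_1\cap K_2)\times[0,1]$ has its bottom collapsed to the single point $a_0$ and its top collapsed to $b_0$, so the slab becomes precisely the (unreduced) suspension $\Sigma(K_1\cap K_2)$, with its south pole glued to $a_0\in K_1$ and its north pole glued to $b_0\in K_2$.

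It remains to untangle this space. I would fix a vertex $c_0\in K_1\cap K_2$ and consider the meridian arc $\{c_0\}\times[0,1]$ running from $a_0$ to $b_0$ inside the suspension. This arc is a contractible subcomplex whose inclusion is a cofibration, so collapsing it to a point is a homotopy equivalence. Collapsing it identifies $a_0$ with $b_0$ at a common point $\ast$, wedging $K_1$ and $K_2$ there, while turning $\Sigma(K_1\cap K_2)$ into its reduced suspension (homotopy equivalent to the unreduced one for the well-pointed complex $K_1\cap K_2$). The result is $K_1\vee K_2\vee\Sigma(K_1\cap K_2)$, as desired.

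The main obstacle is the second step: justifying that the double mapping cylinder is a homotopy invariant of its defining maps, so that the null-homotopies can actually be fed in. This is exactly the gluing lemma for homotopy pushouts of cofibrations, and it is where all of the topology resides. The remaining points --- that subcomplex inclusions are cofibrations, that the meridian collapse is a homotopy equivalence, and that the reduced and unreduced suspensions agree --- are standard, though one should keep an eye on connectivity so that the wedge is well defined, and note that the formula degenerates correctly when $K_1\cap K_2=\emptyset$ (where $\Sigma(K_1\cap K_2)=S^0$ and the double mapping cylinder is the disjoint union $K_1\sqcup K_2$).
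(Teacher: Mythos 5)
The paper does not actually prove this lemma itself; it is quoted from \cite{GSS22}, so there is no internal proof to compare against. Your argument is a correct, self-contained proof and is essentially the standard one: the inclusions $K_1\cap K_2\hookrightarrow K_i$ are inclusions of subcomplexes, hence cofibrations, so $K=K_1\cup K_2$ is already a homotopy pushout and may be replaced by the double mapping cylinder; the gluing lemma for adjunction spaces along cofibrations (applied once to each end of the slab $(K_1\cap K_2)\times[0,1]$) lets you substitute the constant maps for $\imath_1$ and $\imath_2$; the slab then becomes an unreduced suspension attached to $K_1$ at its south pole and to $K_2$ at its north pole; and collapsing a meridian arc, a contractible subcomplex, yields the wedge, with the reduced and unreduced suspensions agreeing for a well-pointed complex. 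Every ingredient you flag is indeed standard for CW or simplicial pairs, so the proof goes through. The one genuine slip is the closing parenthetical: the formula does \emph{not} degenerate correctly when $K_1\cap K_2=\emptyset$. In that case $K=K_1\sqcup K_2$, whereas $K_1\vee K_2\vee S^0$ is $(K_1\vee K_2)\sqcup\{\ast\}$; for instance, two disjoint circles are not homotopy equivalent to a figure eight together with an isolated point. Every application in the paper has $K_1\cap K_2$ nonempty and connected, so nothing downstream is affected, but you should either exclude the empty case explicitly or drop the claim that it works.
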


The next lemma (see \cite{AA22}, Lemma 1) is an easy corollary of this result. For any vertex $v$ in a complex $K$, $K\setminus v$ denote the induced complex on the vertex set $K^{(0)}\setminus \{v\}$. The star of a vertex $v$ in $K$ is st$_K(v)=\{\sigma: \sigma\cup \{v\}\in K\}$. Hence for any $v\in V$, st$_K(v)$ is contractible because it is a cone over $\text{lk}_K(v)$ with the vertex $v$, namely $v\ast \text{lk}_K(v)$, where $\text{lk}_K(v)=\{\sigma: \sigma\cup \{v\}\in K\text{ and }v\notin \sigma\}$.

\begin{lemma} \label{complex_add_1v}If $v$ is a vertex in $K$ with the inclusion map $\imath: \text{lk}_K(v)\rightarrow K$ being null-homotopic, then $K$ is homotopic to $K\setminus v\vee \Sigma (\text{lk}_K(v))$.

\end{lemma}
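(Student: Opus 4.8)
The plan is to obtain this as a direct application of Lemma~\ref{cup_simp} to the decomposition of $K$ that isolates $v$. I would take $K_1=\text{st}_K(v)$ and $K_2=K\setminus v$. Every simplex of $K$ either contains $v$, in which case it lies in the closed star $\text{st}_K(v)$, or avoids $v$, in which case it lies in the induced subcomplex $K\setminus v$; hence $K=K_1\cup K_2$. The first thing to pin down is the intersection: a simplex $\sigma$ lies in $K_1\cap K_2$ precisely when $\sigma\cup\{v\}\in K$ and $v\notin\sigma$, which is exactly the defining condition of $\text{lk}_K(v)$. So $K_1\cap K_2=\text{lk}_K(v)$.

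To apply Lemma~\ref{cup_simp} I must check that both inclusions of the intersection into the two pieces are null-homotopic. The inclusion $\imath_1\colon \text{lk}_K(v)\to \text{st}_K(v)$ comes for free: since $\text{st}_K(v)=v\ast\text{lk}_K(v)$ is a cone, it is contractible, so every map into it is null-homotopic. The remaining inclusion $\imath_2\colon \text{lk}_K(v)\to K\setminus v$ is where the hypothesis enters, and this is the step I expect to be the real content. Granting that $\imath_2$ is null-homotopic, Lemma~\ref{cup_simp} gives
\[
K\;\simeq\;\text{st}_K(v)\;\vee\;(K\setminus v)\;\vee\;\Sigma\big(\text{lk}_K(v)\big).
\]
Because $\text{st}_K(v)$ is contractible, the first wedge factor can be discarded without changing the homotopy type, leaving $K\simeq (K\setminus v)\vee\Sigma(\text{lk}_K(v))$, which is the assertion.

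The one point I would treat carefully is exactly which inclusion of the link must be assumed null-homotopic. The inclusion of $\text{lk}_K(v)$ into all of $K$ factors through the contractible star $\text{st}_K(v)$, so it is automatically null-homotopic and carries no information; what actually drives the splitting is the null-homotopy of $\imath_2\colon\text{lk}_K(v)\to K\setminus v$. I would therefore read the hypothesis as a statement about the inclusion into $K\setminus v$, and it is precisely this condition that one verifies in the subsequent applications of the lemma.
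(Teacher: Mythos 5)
Your proof is correct and follows exactly the paper's own argument: decompose $K=\text{st}_K(v)\cup(K\setminus v)$ with intersection $\text{lk}_K(v)$, apply Lemma~\ref{cup_simp}, and absorb the contractible star. Your closing remark is well taken and in fact sharper than the paper's own treatment: the inclusion $\text{lk}_K(v)\hookrightarrow K$ is always null-homotopic since it factors through the contractible star, so the hypothesis must indeed be read as null-homotopy of the inclusion into $K\setminus v$, which is what Lemma~\ref{cup_simp} actually requires and what is verified in the later applications.
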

\begin{proof} Let $v$ be a vertex in $K$ such that the inclusion map $\imath: \text{lk}_K(v)\rightarrow K$ is null-homotopic. It is straightforward to verify that $K=\text{st}_K(v)\cup K\setminus v$. Since $\text{st}_K(v)$ is contractible, the inclusion from $\text{lk}_K(v)$ to $\text{st}_K(v)$ is also null-homotopic. Notice that $(K\setminus v)\cap \text{st}_K(v) = \text{lk}_K(v)$. Hence by Lemma~\ref{cup_simp} and the fact that $\text{st}_K(v)$ is contractible,
$$K\simeq K\setminus v\vee \text{st}_K(v)\vee \Sigma(\text{lk}_K(v))\simeq  K\setminus v\vee \Sigma(\text{lk}_K(v)).   $$ \end{proof}

\begin{lemma}\label{subcom_simplex} If $\sigma$ is a $k$-simplex and $K_\sigma$ is the simplicial complex generated by $\sigma$, then $K_\sigma^{(n)}$ is homotopy equivalent to a wedge sum of $k \choose n+1$-many of $S^{n}$ for any $n<k$.
\end{lemma}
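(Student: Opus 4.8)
The plan is to induct on the number of vertices of $\sigma$, stripping off a single vertex at each stage via Lemma~\ref{complex_add_1v}. Write the vertex set of the $k$-simplex $\sigma$ as $\{v_0, v_1, \ldots, v_k\}$, so that $K_\sigma$ is the full simplex on these $k+1$ vertices and $K_\sigma^{(n)}$ is the subcomplex consisting of all subsets of size at most $n+1$. The statement to be proved by induction on $k$ is that for every $n$ with $0 \le n < k$ one has $K_\sigma^{(n)} \simeq \bigvee_{\binom{k}{n+1}} S^n$, with the convention that an empty wedge is a point. For the base of the induction, and more generally whenever $n = 0$, the skeleton $K_\sigma^{(0)}$ is simply $k+1$ disjoint points, which is $\bigvee_k S^0$, matching $\binom{k}{1} = k$.

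For the inductive step I would fix $n$ with $1 \le n < k$, set $K = K_\sigma^{(n)}$, and delete the vertex $v_k$. The two pieces feeding into Lemma~\ref{complex_add_1v} are then easy to identify combinatorially. The deletion $K \setminus v_k$ is the induced complex on $\{v_0, \ldots, v_{k-1}\}$, i.e. the $n$-skeleton of the $(k-1)$-simplex they span; by the inductive hypothesis it is $\bigvee_{\binom{k-1}{n+1}} S^n$. (When $n = k-1$ this deletion is the full, hence contractible, $(k-1)$-simplex, which is consistent since $\binom{k-1}{k} = 0$; this is the only place the empty-wedge convention is needed.) The link $\text{lk}_K(v_k)$ consists of the faces $\tau \subseteq \{v_0, \ldots, v_{k-1}\}$ with $|\tau| \le n$, that is, the $(n-1)$-skeleton of the same $(k-1)$-simplex, so the inductive hypothesis gives $\text{lk}_K(v_k) \simeq \bigvee_{\binom{k-1}{n}} S^{n-1}$.

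The step I expect to be the crux is verifying the hypothesis of Lemma~\ref{complex_add_1v}, namely that the inclusion $\imath: \text{lk}_K(v_k) \hookrightarrow K$ is null-homotopic. Here I would note that this inclusion factors through $K \setminus v_k$, that $\text{lk}_K(v_k)$ is a complex of dimension $n-1$, and that $K \setminus v_k \simeq \bigvee S^n$ is $(n-1)$-connected. Since any map from a CW complex of dimension at most $m$ into an $m$-connected space is null-homotopic, the inclusion $\text{lk}_K(v_k) \hookrightarrow K \setminus v_k$ is null-homotopic, and therefore so is $\imath$.

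With the null-homotopy established, Lemma~\ref{complex_add_1v} yields $K \simeq (K \setminus v_k) \vee \Sigma(\text{lk}_K(v_k))$. Using $\Sigma S^{n-1} \cong S^n$ and distributing the suspension over the wedge, this reads $\bigvee_{\binom{k-1}{n+1}} S^n \vee \bigvee_{\binom{k-1}{n}} S^n$, and Pascal's identity $\binom{k-1}{n+1} + \binom{k-1}{n} = \binom{k}{n+1}$ collapses it to $\bigvee_{\binom{k}{n+1}} S^n$, completing the induction.
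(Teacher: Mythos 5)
Your proof is correct, but it runs the induction differently from the paper. The paper fixes a single vertex $v_0$ and decomposes $K_\sigma^{(n)}$ as $\mathrm{st}_K(v_0)$ together with the subcomplexes generated by the $\binom{k}{n+1}$ top-dimensional faces $\tau_i$ that avoid $v_0$; since the star is contractible and each $K_{\tau_i}$ meets the growing union in $\partial\tau_i\simeq S^{n-1}$, gluing them in one at a time via the wedge lemma (Lemma~\ref{cup_simp}) contributes one $S^n$ per face, so the count $\binom{k}{n+1}$ appears directly as an enumeration of those faces. You instead induct on the number of vertices, deleting $v_k$ via Lemma~\ref{complex_add_1v}, identifying the deletion and the link as the $n$- and $(n-1)$-skeleta of the $(k-1)$-simplex, and letting Pascal's identity $\binom{k-1}{n+1}+\binom{k-1}{n}=\binom{k}{n+1}$ produce the count. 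Your handling of the crux --- the null-homotopy of $\mathrm{lk}_K(v_k)\hookrightarrow K$ via the dimension-versus-connectivity argument --- is sound (and is the same device the paper uses implicitly at the later stages of its own induction, once the partial union is a wedge of $S^n$'s rather than contractible), and your edge cases ($n=0$, and $n=k-1$ where the deletion is the full contractible simplex) are treated correctly. The trade-off is that the paper's route exhibits the spheres concretely as suspensions of the boundaries of the $n$-faces missing $v_0$, while yours is a shorter formal induction whose combinatorics is absorbed into Pascal's rule.
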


\begin{proof} Assume $\sigma=\{v_0, v_1, \ldots, v_k\}$ and $K_\sigma$ is the simplicial complex generated by $\sigma$. Denote $K=K_\sigma^{(n)}$.  Notice that there are ${k\choose n+1}$-many $n$-simplices which don't contain $v_0$.  List such $n$-simplices as $\tau_1, \tau_2, \ldots, \tau_{k\choose n+1}$. Then $K=\text{st}_K(v_0)\cup \bigcup\{K_{\tau_i}: i=1, 2, \ldots, {k\choose n+1}\}$ where $K_{\tau_i}$ is the simplicial complex generated by $\tau_i$ for each $i$. Next we show that $K\simeq \bigvee_{k\choose n+1}S^n$ by induction.

First notice that $\text{st}_K(v_0)\cap K_{\tau_1}$ is the boundary of $\tau_1$, and hence is homotopy equivalent to $S^{n-1}$. Since both $\text{st}_K(v_0)$ and $K_{\tau_1}$ are contractible, the inclusion maps from their intersection to each of them are null-homotopic. By Lemma~\ref{complex_add_1v}, $\text{st}_K(v_0)\cup K_{\tau_1}\simeq \Sigma S^{n-1}\simeq S^n$. Then inductively, $K\simeq \bigvee_{k\choose n+1} S^n$.      \end{proof}

Also in this paper for convenience, we set $\sum_{i=a}^{b} f(i)=0$ when $b<a$, where $f$ is a function on the set of natural numbers.

\section{Star Cluster of a subcomplex}

To investigate the topology of the independence complex of graphs, Barmak \cite{Bar13} introduced a general tool using which he answered a question arisen from works of Engstr\"{o}m and Jonsson and investigated lots of examples appearing from literature. It turns out this concept is a powerful tool to understand general simplicial complexes.  For any subcomplex $L$ of $K$, we define the \emph{star cluster} of $L$ in $K$ as the subcomplex  $$\text{SC}_K(L)=\bigcup_{v\in L^{(0)}} \text{st}_K(v).$$

If $\sigma$ is a simplex in $K$, Barmak in \cite{Bar13} proved that  $\text{SC}_K(\sigma)$ is contractible, hence homotopy equivalent to $\sigma$. In general, given that $L$ is a subcomplex of $K$, $\text{SC}_K(L)$ is not homotopy equivalent to $L$ as showed in the example below.

\begin{exam} Let $K=\mathcal{VR}(\mathcal{P}([2]), 1)$ and $L$ be the full subcomplex with vertices $\{\emptyset, \{1\}, \{2\}\}$. Then $L$ is contractible but in the other hand,  SC$_K(L)=K$ which is homotopy equivalent to $S^1$. \end{exam}

\begin{center}
\begin{tikzpicture}[scale=3]
\draw[thick] (0,0) -- (1,0) -- (1,1) -- (0,1) -- cycle;
\filldraw[black] (0,0) circle (1pt) node[below left]{$\emptyset$};
\filldraw[black] (1,0) circle (1pt) node[below right]{$\{1\}$};
\filldraw[black] (1,1) circle (1pt) node[above right]{$\{1, 2\}$};
\filldraw[black] (0,1) circle (1pt) node[above left]{$\{2\}$};
\end{tikzpicture}
\end{center}

Next, we'll give a sufficient condition under which the star cluster of a subcomplex $L$ in $K$ is homotopy equivalent to $L$. This result is a generalization of Barmak's result about SC$_K(\sigma)$ being contractible for any simplex $\sigma$ in $K$; and it is also  heavily used to determine the homotopy type of simplicial complexes in this paper.

\begin{lemma}\label{SC_homo} Let $K$ be a clique complex and $L$ a clique subcomplex of $K$. Suppose that the edge $\{v, w\}$ is in $L$ for any pair $v, w\in L^{(0)}$ with $(\text{st}_K(v)\cap \text{st}_K(w))\setminus L\neq \emptyset$. Then the following hold:


\begin{itemize}

\item[i)] $L$ is a full subcomplex of $K$; 

\item[ii)] for any collection of vertices, $v_1, v_2, \ldots, v_\ell$ in $L$, the complex  $L'=L\cup \bigcup_{i=1}^\ell\text{st}_K(v_i))$ is homotopy equivalent to $L$.  

\end{itemize}
In particular, ii) implies that $\text{SC}_K(L)$ is homotopy equivalent to $L$.
\end{lemma}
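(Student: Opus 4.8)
The plan is to handle part i) first, since fullness of $L$ will be used repeatedly in part ii). The key observation is that any edge $\{v,w\}\in K$ with $v,w\in L^{(0)}$ is, as a simplex of $K$, an element of both $\text{st}_K(v)$ and $\text{st}_K(w)$. Thus if such an edge failed to lie in $L$, it would itself witness $(\text{st}_K(v)\cap\text{st}_K(w))\setminus L\neq\emptyset$, and the hypothesis would force $\{v,w\}\in L$ --- a contradiction. Hence every edge of $K$ spanned by vertices of $L$ already lies in $L$. Since both $K$ and $L$ are clique complexes, this upgrades at once to fullness: any simplex $\sigma\in K$ with $\sigma\subseteq L^{(0)}$ has all of its edges in $L$, so $\sigma\in L$.

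For part ii) I would induct on $\ell$, adjoining one star $\text{st}_K(v_\ell)$ at a time to $A:=L\cup\bigcup_{i<\ell}\text{st}_K(v_i)$, which is homotopy equivalent to $L$ by the inductive hypothesis. The heart of the argument is the claim that the intersection $\text{st}_K(v_\ell)\cap A$ is a cone with apex $v_\ell$, hence contractible. To prove this I would take a simplex $\tau\in\text{st}_K(v_\ell)\cap A$ and show $\tau\cup\{v_\ell\}$ again lies in the intersection; membership in $\text{st}_K(v_\ell)$ is automatic, so the work is to place $\tau\cup\{v_\ell\}$ in $A$. If $\tau\in L$, this follows from fullness (part i)), since $\tau\cup\{v_\ell\}\in K$ has all vertices in $L^{(0)}$. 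If instead $\tau\in\text{st}_K(v_i)\setminus L$ for some $i<\ell$, then $\tau$ witnesses $(\text{st}_K(v_i)\cap\text{st}_K(v_\ell))\setminus L\neq\emptyset$, so the hypothesis yields $\{v_i,v_\ell\}\in L$, in particular an edge of $K$; the clique property then assembles $\tau\cup\{v_i,v_\ell\}$ into a simplex of $K$, placing $\tau\cup\{v_\ell\}$ into $\text{st}_K(v_i)\subseteq A$.

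Once $\text{st}_K(v_\ell)\cap A$ is known to be contractible, the inclusions into the contractible $\text{st}_K(v_\ell)$ and into $A$ are null-homotopic, so Lemma~\ref{cup_simp} gives $L'\simeq A\vee\text{st}_K(v_\ell)\vee\Sigma(\text{st}_K(v_\ell)\cap A)$; the last two summands are contractible (a suspension of a contractible space is contractible), hence $L'\simeq A\simeq L$, closing the induction. The statement about $\text{SC}_K(L)$ then follows by applying ii) with $\{v_1,\dots,v_\ell\}=L^{(0)}$, after noting that $L\subseteq\text{SC}_K(L)$. I expect the cone claim in the inductive step to be the main obstacle: it is precisely there that both hypotheses --- fullness and the edge condition --- must be combined, and the case split on whether $\tau\in L$ is what makes the two hypotheses interlock.
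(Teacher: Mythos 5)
Your proposal is correct and follows essentially the same route as the paper: the identical edge-witness argument for fullness, then induction adjoining one star at a time, with the same two-case analysis ($\tau\in L$ handled by fullness, $\tau\notin L$ handled by the edge hypothesis plus the clique property) to show the intersection is contractible; the only cosmetic difference is that you exhibit $\text{st}_K(v_\ell)\cap A$ directly as a cone with apex $v_\ell$, whereas the paper identifies it as $\text{st}_{A}(v_\ell)$, which is the same cone.
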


\begin{proof} First we prove i). Let $\sigma=\{w_0, w_1, \ldots, w_k\}$ be a simplex in $K$ and $w_j\in L$ for each $j=0, 1, \ldots, k$. Take an arbitrary pair $w_j, w_{j'}$ of vertices from $\sigma$ with $j\neq j'$. Suppose, for contradiction, that $\{w_j, w_{j'}\}\notin L$.  Since the $1$-simplex $\{w_j, w_{j'}\}$ is in $K$, it is in both  $\text{st}_K(w_j)$ and $\text{st}_K(w_{j'})$. Hence, $\text{st}_K(w_j)\cap \text{st}_K(w_{j'})\setminus L\neq \emptyset$. Then by the assumption the edge $\{w_j, w_{j'}\}\in L$ which is a contradiction. Therefore each pair of vertices in $\sigma$ forms an edge in $L$. Since $L$ is a clique complex, $\sigma\in L$.

We'll prove ii) by induction. Suppose that the vertices $v_1, v_2, \ldots, v_{k-1}$ in $L$  satisfy that the complex $L_0= L\cup \bigcup\{\text{st}_{K}(v_i): i=1, 2, \ldots, k-1\}\simeq L$. When $k=1$, $L_0=L$ and the result holds. Let $v_k$ be any other vertex in $L$ and $L_1=L_0\cup \text{st}_K(v_k)$. We'll show that $L_1\simeq L$.

We claim that $L_0\cap \text{st}_{K}(v_k)=\text{st}_{L_0}(v_k)$. Note that both $\text{st}_{K}(v_k)$ and $\text{st}_{L_0}(v_k)$ are contractible, hence so is $\Sigma(\text{st}_{L_0}(v_k))$.  Then by Lemma~\ref{cup_simp} and the inductive assumption, $$L_1= L_0\cup \text{st}_{K}(v_k)\simeq L_0\vee \Sigma (\text{st}_{L_0}(v_k))\vee \text{st}_{K}(v_k)\simeq L_0\simeq L.$$

Next we prove our claim above. The inclusion $\text{st}_{L_0}(v_k)\subseteq L_0\cap \text{st}_{K}(v_k)$ is clear from definition. Then, take a simplex $\sigma\in L_0\cap \text{st}_{K}(v_k)$ and we'll prove $\sigma\in \text{st}_{L_0}(v_k)$ in the following two cases.
\begin{itemize}
\item[Case a):] Suppose that all the vertices of $\sigma$ are in $L$. Since $\sigma\in \text{st}_{K}(v_k)$, $\sigma\cup\{v_k\}$ is a simplex in $K$ whose vertices are in $L$. Then by i), $\sigma\cup\{v_k\}\in L\subseteq L_0$; hence $\sigma\in \text{st}_{L_0}(v_k)$.

\item[Case b):] Suppose that the simplex $\sigma$ contains at least one vertex  not in $L$. Then clearly $\sigma\notin L$. Because  $\sigma\in L_0$, then there exists at lease one $k_0$ with $1\leq k_0 \leq k-1$ such that $\sigma\in \text{st}_K(v_{k_0})$. So $\sigma\cup \{v_{k_0}\}$ is a simplex in $K$.   Since $\sigma \in \text{st}_{K}(v_k)$, $\sigma\cup \{v_{k}\}$ is also a simplex in $K$. Also note that $\sigma\in (\text{st}_K(v_{k_0})\cup \text{st}_{K}(v_k))\setminus L$. By the assumption $\{v_{k_0}, v_k\}$ is an edge in $K$. Since $K$ is a  clique complex, $\sigma\cup \{v_{k_0}, v_k\}$ is a simplex in $K$; and this simplex is in  $\text{st}_{K}(v_{k_0})\subseteq L_0$. Hence the simplex $\sigma\cup \{v_{k_0}, v_k\}$ is in $\text{st}_{L_0}(v_k)$ which implies that $\sigma$ is also in $\text{st}_{L_0}(v_k)$.

\end{itemize}
\end{proof}



Next, we give a way to split a complex $K$ into a union of two subcomplexes using star clustering. Then we could apply Lemma~\ref{cup_simp} to investigate the homotopy type of the complex $K$.

\begin{lemma}\label{SC_complex} Let $K$ be a simplicial complex and $K_1, K_2$ be subcomplexes of $K$ such that
\begin{itemize}
\item[i)]$K^{(0)}=K_1^{(0)}\cup K_2^{(0)}$;

\item[ii)] $K_2$ is a full subcomplex of $K$. 

\end{itemize}

Then $K=\text{SC}_{K}(K_1)\cup K_2$. \end{lemma}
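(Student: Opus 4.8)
The plan is to establish the set equality by double inclusion, the only substantive direction being $K\subseteq \text{SC}_K(K_1)\cup K_2$. The reverse containment is immediate: $\text{SC}_K(K_1)$ is a union of stars and hence a subcomplex of $K$, while $K_2$ is a subcomplex of $K$ by hypothesis, so $\text{SC}_K(K_1)\cup K_2\subseteq K$ needs no further comment.

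For $K\subseteq \text{SC}_K(K_1)\cup K_2$, I would fix an arbitrary simplex $\sigma\in K$ and split on whether $\sigma$ uses a vertex outside $K_2^{(0)}$. If every vertex of $\sigma$ lies in $K_2^{(0)}$, then $\sigma$ is a simplex of $K$ spanned by vertices of $K_2$, so the fullness assumption (ii) gives $\sigma\in K_2$. Otherwise $\sigma$ has some vertex $v\notin K_2^{(0)}$; since $v\in K^{(0)}=K_1^{(0)}\cup K_2^{(0)}$ by assumption (i), this forces $v\in K_1^{(0)}$. Because $v$ is a vertex of $\sigma$ we have $\sigma\cup\{v\}=\sigma\in K$, so $\sigma\in \text{st}_K(v)$ by definition of the star, and as $v\in K_1^{(0)}$ this star is one of the terms in $\text{SC}_K(K_1)=\bigcup_{w\in K_1^{(0)}}\text{st}_K(w)$. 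Hence $\sigma\in \text{SC}_K(K_1)$, and in either case $\sigma\in \text{SC}_K(K_1)\cup K_2$, which completes the inclusion.

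The argument is essentially a bookkeeping of definitions, so I do not expect a genuine obstacle. The single point where a hypothesis does real work is the first case, where the fullness of $K_2$ is exactly what promotes a simplex spanned by $K_2$-vertices to a simplex of $K_2$; without it the claim would fail. I would also take care to invoke the convention $\text{st}_K(v)=\{\sigma:\sigma\cup\{v\}\in K\}$ correctly, noting that any face $\sigma$ containing $v$ automatically satisfies $\sigma\cup\{v\}=\sigma\in K$, which is precisely what makes the second case go through.
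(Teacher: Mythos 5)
Your proof is correct and follows essentially the same argument as the paper: split on whether the simplex has a vertex outside $K_2^{(0)}$ (equivalently, a vertex in $K_1^{(0)}$), putting it either in a star of $\text{SC}_K(K_1)$ or, by fullness, in $K_2$. The only cosmetic difference is that you also spell out the trivial reverse inclusion, which the paper omits.
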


\begin{proof} Let $\sigma$ be a simplex of $K$. If one of $\sigma$'s vertices, namely $v$, is in $K_1$, then $\sigma\in\text{st}_K(v) \subseteq \text{SC}_K(K_1)$; otherwise, $\sigma\in K_2$ by the assumption.
\end{proof}

\section{Vietoris-Rips Complex $\mathcal{VR}(\mathcal{F}_{n}^{m},2)$}\label{F_n_m}

Starting from this section, each vertex of a complex is a subset of $[m]$ and we'll use $A$, $B$, $C$, or $D$ to represent them.   For any subset $C$ of $[m]$, denote $N[C]=\{A\in \mathcal{P}([m]): C\subset A \text{ and }|A\setminus C|=1\}$ and  $L[C]=\{A\in \mathcal{P}([m]): A\subset C\text{ and }|C\setminus A|=1\}$.


 Fix $n, m\in\mathbb{N}$ with $n<m$. For any $\{i_1,i_2,\ldots,i_n, i_{n+1}\}\in [m]$ with $i_1<i_2<\ldots<i_n<i_{n+1}$, we get that  $$N[i_1,i_2,\ldots ,i_{n-1}]=\{A\in \mathcal{F}_{n}^m:\{i_1,i_2,\ldots,i_{n-1}\}\subset A\}, \text{ and }$$
 $$L[i_1,i_2,\ldots,i_{n+1}]=\{ i_1i_2\cdots \hat{i_j}\cdots i_{n+1}: j\in \{1,...,n+1\} \},$$
here, $i_1i_2\cdots \hat{i_j}\cdots i_{n+1}$ is defined to be  $\{i_1,i_2,\ldots,i_n, i_{n+1}\}\setminus \{i_j\}$ for each $j$.

\begin{lemma} Assume that $m\geq n+2$ with $n\geq 2$ and $\{i_1, i_2, \ldots, i_{n+1}\}\subseteq [m]$.  Then $N[i_1,i_2,..,i_{n-1}]$ and $L[i_1,i_2,\ldots,i_{n+1}]$ are maximal simplices in  the complex $\mathcal{VR}(\mathcal{F}_n^{m}, 2)$.

\end{lemma}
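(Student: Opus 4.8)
The plan is to reduce everything to a single distance identity. Since every vertex here is an $n$-subset of $[m]$, for any two $n$-sets $A$ and $B$ one has $d(A,B)=|A\Delta B|=2(n-|A\cap B|)=2|A\setminus B|$, which is always even. Consequently two distinct $n$-sets are joined by an edge in $\mathcal{VR}(\mathcal{F}_n^m,2)$ precisely when $|A\cap B|=n-1$, i.e. they differ in exactly one element, and for a general $n$-set $C$ the inequality $d(C,A)\le 2$ is equivalent to $|A\setminus C|\le 1$. Because $\mathcal{VR}(\mathcal{F}_n^m,2)$ is a clique complex, a collection of $n$-sets is a simplex iff it is pairwise adjacent, and it is a \emph{maximal} simplex iff it is a maximal clique; so the whole statement becomes a purely combinatorial claim about maximal cliques.

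First I would check that the two sets are simplices. The members of $N[i_1,\dots,i_{n-1}]$ are exactly the sets $S\cup\{x\}$ with $S=\{i_1,\dots,i_{n-1}\}$ and $x\in[m]\setminus S$; any two of them differ in exactly one element, hence are adjacent. Likewise the members of $L[i_1,\dots,i_{n+1}]$ are the $n$-subsets $T\setminus\{i_j\}$ of the fixed $(n+1)$-set $T=\{i_1,\dots,i_{n+1}\}$, and any two of them differ only in the pair of indices they omit, hence are adjacent. Thus both are cliques, i.e. simplices.

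The substance is maximality, i.e. showing that any $n$-set $C$ adjacent to every vertex already lies in the set. For $N[S]$ I would first argue that $|S\setminus C|\le 1$: if $C$ missed two elements of $S$, then for every $A=S\cup\{x\}$ we would have $S\setminus C\subseteq A\setminus C$, forcing $|A\setminus C|\ge 2$ and $d(C,A)\ge 4$. Ruling out $|S\setminus C|=1$ is where the hypothesis $m\ge n+2$ enters: if $S\setminus C=\{i_p\}$ then $C$ carries two elements $u,v$ outside $S$, and since $|[m]\setminus S|=m-n+1\ge 3$ I can pick $x\in[m]\setminus S$ distinct from $u,v$; then $A=S\cup\{x\}$ satisfies $A\setminus C=\{i_p,x\}$, again giving $d(C,A)=4$. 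Hence $S\subseteq C$, so $C=S\cup\{w\}\in N[S]$. For $L[T]$ I would set $t=|C\setminus T|$ and assume $t\ge 1$ for contradiction; a short count shows that whenever $i_j\in C$ the neighbor $A_j=T\setminus\{i_j\}$ has $|A_j\setminus C|=t+1\ge 2$ (such a $j$ exists unless $C\cap T=\emptyset$, in which case every $A_j$ has $|A_j\setminus C|=t=n\ge 2$); either way some distance is $\ge 4$, contradicting adjacency. Thus $C\subseteq T$ and $C\in L[T]$.

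The main obstacle, and the only place the numerical hypotheses do real work, is this maximality direction. Within it, the borderline case for $N[S]$ where $C$ misses exactly one element of $S$ is the delicate point: this is exactly where $m\ge n+2$ (equivalently $|[m]\setminus S|\ge 3$) is needed to produce a separating element $x$, while $n\ge 2$ is what kills the disjoint case for $L[T]$. Everything else is bookkeeping with the identity $d(C,A)=2|A\setminus C|$ applied to well-chosen neighbors.
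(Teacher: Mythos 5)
Your proof is correct and takes essentially the same route as the paper's: both verify directly that each family is a clique and then establish maximality by producing, for any $n$-set outside $N[S]$ or $L[T]$, a member at symmetric-difference distance greater than $2$, with the same case split on whether the candidate meets $S$ (resp.\ $T$) and the same use of $m\ge n+2$ to find a separating element $x$ and of $n\ge 2$ to handle the disjoint case. The only difference is cosmetic: you phrase maximality contrapositively (``any set adjacent to all vertices already lies in the family'') via the identity $d(A,B)=2|A\setminus B|$, whereas the paper exhibits the distant witness directly.
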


\begin{proof} It is straightforward to verify that $N[i_1,i_2,\ldots,i_{n-1}]$ is an $(m-n)$-simplex  and $L[i_1,i_2,\ldots,i_{n+1}]$ is an $n$-simplex in $\mathcal{VR}(\mathcal{F}_n^{m}, 2)$.

First, we show that $N[i_1,i_2,\ldots,i_{n-1}]$ is a maximal simplex in $\mathcal{VR}(\mathcal{F}_n^{m}, 2)$.  Let $A$ be an $n$-subset of $[m]$ such that $A\notin N[i_1,i_2,\ldots,i_{n-1}]$. Without loss of generality, we assume that $i_1\notin A$, then we pick $i,j\in A\setminus \{i_1, i_2,\ldots, i_{n-1}\}$ and $k\in[m]\setminus \{i, j, i_1, i_2, \ldots, i_{n-1}\} $. Let $B=\{i_1, i_2, \ldots, i_{n-1}, k\}$ which is clearly in $N[i_1, i_2, \ldots, i_{n-1} ]$. Clearly, $d(A, B)\geq 4$ since $\{i, j, k, i_1\}\subseteq A\Delta B$.  Hence $N[i_1,i_2,..,i_{n-1}]$ is a maximal simplex in $\mathcal{VR}(\mathcal{F}_n^{m}, 2)$.

Next, we show that  $L[i_1,i_2,\ldots,i_{n+1}]$ is a maximal simplex in $\mathcal{VR}(\mathcal{F}_n^{m}, 2)$. Let $A$ be an $n$-subset of $[m]$ such that $A\notin L[i_1,i_2,\ldots,i_{n+1}]$. Then there is an $i\in [m]\setminus \{i_1,i_2,\ldots,i_{n+1}\}$ such that $i\in A$. Suppose $A\cap \{i_1, i_2, \ldots, i_{n+1}\}\neq \emptyset$. Then we can pick $B\in L[i_1,i_2,\ldots,i_{n+1}]$ such that $ (\{i_1,i_2,\ldots,i_{n+1}\}\setminus A) \subset B$. Then $|A\setminus B|\geq 1$ since $i\in A\setminus B$. Also, $|B\setminus A|\geq 2$ since at least $2$ elements in $\{i_1, i_2, \ldots, i_{n+1}\}$ are not in $A$. So $d(A, B)\geq 3$. If $A\cap \{i_1, i_2, \ldots, i_{n+1}\}= \emptyset$, $d(A, B)\geq 2n\geq 4$ for any $B\in L[i_1,i_2,\ldots,i_{n+1}]$. Hence $L[i_1,i_2,\ldots,i_{n+1}]$ is a maximal simplex in $\mathcal{VR}(\mathcal{F}_n^{m}, 2)$.     \end{proof}

For convenience in this paper, we will use $N[i_1,i_2,\ldots ,i_{n-1}]$ or  $L[i_1,i_2,\ldots,i_{n+1}]$ to represent both a simplex and the subcomplex generated by the simplex in $\mathcal{VR}(\mathcal{F}_n^m, 2)$ or any other complexes containing them.

For a complex $K$, let $M(K)$ be the collection of maximal simplices in $K$. Clearly $K=\bigcup M(K)$. Hence it is important to understand the collection of maximal simplices in a complex. Next, we show that there are only these two of maximal simplices in $\mathcal{VR}(\mathcal{F}_n^{m}, 2)$.

\begin{lemma} \label{max_simp}
Fix $n, m\in\mathbb{N}$ with $1<n<m$. Let $K$ be the complex $\mathcal{VR}(\mathcal{F}_n^{m}, 2)$.

\begin{itemize}

\item[i)] Any maximal simplex $\sigma$ in $K$ is  either $N[i_1,i_2,..,i_{n-1}]$  or  $L[i_1,i_2,..,i_{n+1}]$ for $i_1,i_2,i_3,..., i_{n+1} \in [m]$ with $i_1<i_2<...<i_n<i_{n+1}$.
\item[ii)] For any $k\geq 2$ and $\{A_1, A_2, \ldots, A_{k+1}\}$ being a $k$-simplex in $K$ such that $|\bigcap_{\ell=1}^{k+1}A_\ell|<n-1$, the only maximal simplex containing $\{A_1, A_2, \ldots, A_{k+1}\}$ as a face is $L[A_1\cup A_2]$.

\end{itemize}

\end{lemma}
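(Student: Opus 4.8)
The plan is to reduce both parts to a single structural dichotomy for three pairwise-adjacent vertices and then read off the conclusions. First I would record the combinatorial meaning of the metric: since all vertices are $n$-subsets of $[m]$, two distinct vertices $A, B$ satisfy $d(A,B)=2|A\setminus B|$, so $\{A,B\}$ is an edge of $K$ exactly when $|A\cap B|=n-1$, equivalently $|A\cup B|=n+1$. Thus each edge determines an $(n-1)$-set $S=A\cap B$ and an $(n+1)$-set $T=A\cup B$, and the two candidate maximal-simplex families are precisely $N[S]$ (the $n$-sets containing $S$) and $L[T]$ (the $n$-sets contained in $T$).

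The heart of the argument is the following dichotomy, which I would prove by a short case analysis. Fix adjacent vertices $A_1,A_2$ and set $S=A_1\cap A_2$, $T=A_1\cup A_2$, so $A_1=S\cup\{a\}$ and $A_2=S\cup\{b\}$. If a vertex $A$ is adjacent to both $A_1$ and $A_2$, then either $A\supseteq S$ (so $A\in N[S]$) or $A\subseteq T$ (so $A\in L[T]$). Indeed, if $A\not\subseteq T$ then $A$ contains some $c\notin T$; the constraints $|A\cap A_1|=|A\cap A_2|=n-1$ then force first $b\notin A$ and then $a\notin A$, which pins down $A=S\cup\{c\}\in N[S]$. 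This last implication, where the two intersection constraints must be tracked simultaneously, is the step I expect to be the main obstacle. I would then add a ``no-mixing'' observation: a vertex $A_3=(S\setminus\{s\})\cup\{a,b\}\in L[T]\setminus N[S]$ and a vertex $A_4=S\cup\{c\}\in N[S]\setminus L[T]$ satisfy $A_3\cap A_4=S\setminus\{s\}$, so $d(A_3,A_4)=4$ and they are not adjacent. Consequently, any simplex with at least three vertices containing the pair $\{A_1,A_2\}$ is entirely contained in $N[S]$ or entirely contained in $L[T]$, since otherwise it would contain such an $A_3$ and such an $A_4$ simultaneously.

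For part i), I would first observe that every maximal simplex has at least three vertices: each vertex has a neighbor because $m>n$, and each edge $\{A_1,A_2\}$ extends to a triangle via $(S\setminus\{s\})\cup\{a,b\}$ because $n\geq 2$. Applying the dichotomy to any two of its vertices, a maximal simplex is therefore contained in some $N[S]$ or some $L[T]$; since each of these is itself a simplex (in fact maximal, by the previous lemma), maximality forces equality, which gives exactly the claimed classification.

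For part ii), I would fix the pair $A_1,A_2$ among the given vertices and again set $S=A_1\cap A_2$, $T=A_1\cup A_2$, so $|T|=n+1$ and $L[T]=L[A_1\cup A_2]$ is a maximal simplex. The dichotomy places the whole simplex in $N[S]$ or in $L[T]$; containment in $N[S]$ would give $\bigcap_{\ell}A_\ell\supseteq S$ and hence $|\bigcap_{\ell}A_\ell|\geq n-1$, contradicting the hypothesis, so the simplex lies in $L[A_1\cup A_2]$. For uniqueness, any maximal simplex $\tau$ containing all the $A_\ell$ is, by part i), of the form $N[S'']$ or $L[T'']$; in the first case $A_1,A_2\supseteq S''$ with $|S''|=n-1$ forces $S''=S$ and reproduces the forbidden intersection bound, while in the second case $A_1,A_2\subseteq T''$ with $|T''|=n+1$ forces $T''=A_1\cup A_2$. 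Hence $\tau=L[A_1\cup A_2]$ is the only possibility.
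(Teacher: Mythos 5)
Your proof is correct, and it is organized differently from the paper's. The paper classifies a maximal simplex $\sigma$ by the size of $\bigcap\sigma$: if $|\bigcap\sigma|=n-1$ it is an $N[\cdot]$, the range $0<|\bigcap\sigma|<n-1$ is ruled out by constructing an auxiliary vertex $C$ adjacent to all of $\sigma$ (contradicting either maximality or the assumed intersection size), and $\bigcap\sigma=\emptyset$ forces an $L[\cdot]$; part ii) is then deduced by rerunning that intersection analysis. You instead prove an edge-local dichotomy: every common neighbor of an adjacent pair $A_1,A_2$ lies in $N[A_1\cap A_2]$ or in $L[A_1\cup A_2]$, and no vertex of $N[S]\setminus L[T]$ is adjacent to any vertex of $L[T]\setminus N[S]$, so any simplex containing the edge sits entirely inside one of the two. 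The underlying computation (a vertex at distance $2$ from both $A_1$ and $A_2$ that contains some $c\notin A_1\cup A_2$ must equal $(A_1\cap A_2)\cup\{c\}$) is essentially the same as the paper's step showing all the $B_\ell\setminus A$ coincide, but your packaging avoids the construction of the auxiliary vertex $C$ and the separate impossibility argument for intermediate intersection sizes, and it makes part ii) an immediate corollary (containment in $N[S]$ would force $|\bigcap_\ell A_\ell|\geq n-1$). The paper's version, on the other hand, directly produces the structural fact that a maximal simplex with empty intersection is an $L[\cdot]$, which is the form in which the lemma is quoted later. Both arguments are complete; yours is somewhat more modular and the dichotomy is reusable, at the cost of needing the small preliminary observations that every maximal simplex contains a triangle.
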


\begin{proof}
To prove i), we pick a maximal simplex  $\sigma$  in the complex $K$.

Note that the vertices of $K$ are subsets of $[m]$. Hence, $\sigma$ is a collection of subsets of $[m]$ and $\bigcap \sigma\subset [m]$. If $|\bigcap \sigma|=n-1$, then clearly $\sigma$ is one of the simplices in the form $N[i_1,i_2,..,i_{n-1}]$.

We claim that the size of the set $\bigcap \sigma$ can't be greater than $0$ and less than $n-1$.  For the purpose of contradiction, we suppose that $0<|\bigcap \sigma|<n-1$. Let $|\bigcap \sigma|=k$ with $0<k<n-1$ and list $\bigcap \sigma$ as $\{i_1, i_2, \cdots,  i_k\}$. Pick $A\in \sigma$ such that $A\setminus \bigcap \sigma =\{j_{1}, j_2, \ldots, j_{n-k}\}$.  For each $\ell=1, 2, \ldots, n-k$,   pick $B_\ell\in \sigma$ such that $j_\ell\notin B_\ell$. Also $|B_\ell\setminus A|=1$ because $d(B_\ell, A)=2$ for each $\ell$. Since $k<n-1$, $n-k\geq 2$. Then we let $i_0$ be the number in $B_1\setminus A$ and $j_0$ be the number in $B_2\setminus A$. If $i_0\neq j_0$, then the $B_1\Delta B_2=\{j_1, i_0, j_2, j_0\}$ which is a contradiction. So $i_0=j_0$.  Therefore, by induction, $B_\ell\setminus A=\{i_0\}$  for each $\ell=1, 2, \ldots, n-k$. Then if $C=\{i_0, i_2, \ldots, i_k, j_1, \ldots j_{n-k}\}$, then $C\Delta A=\{i_0, i_1\}$ and $C\Delta B_\ell=\{i_1, j_\ell\}$ for each $\ell=1, 2, \ldots, n-k$, i.e., $d(C, A)=2$ and $d(C, B_\ell)=2$ for each $\ell=1, 2, \ldots, n-k$.  If $C$ is in $\sigma$, then $i_1\notin \bigcap\sigma$; and if $C$ is not in $\sigma$, then $\sigma $ is not a maximal simplex. These contradictions show that it is impossible that  $0<|\bigcap \sigma|<n-1$.

Now we suppose that $\bigcap \sigma=\emptyset$. Pick $A\in \sigma$ and represent $A$ as $i_1i_2\cdots i_n$. For each $\ell=1, 2, \ldots, n$, there exists $B_\ell \in \sigma$ such that $i_\ell \notin B_\ell$. Using the argument above, we can show that $B_\ell\setminus A=B_{\ell'}\setminus A$ for each $\ell, \ell'=1, 2, \ldots, n$. Denote $B_1\setminus A=\{i_{n+1}\}$. Then clearly $\sigma = L[i_1, i_2, \ldots, i_{n+1}]$.

To prove ii), we start with a $k$-simplex $\{A_1, A_2, \ldots, A_{k+1}\}$ in $K$ such that $|\bigcap_{\ell=1}^{k+1}A_\ell|<n-1$ and $k\geq 2$. Then if $\sigma$ is a maximal simplex in $K$ such that $\{A_1, A_2, \ldots, A_{k+1}\}\subseteq \sigma$, then $\bigcap \sigma=\emptyset$ by the argument above, and hence $\sigma$ is in the form $L[i_1, i_2, \ldots, i_{n+1}]$. Clearly $A_1\cup A_2 \subseteq \{i_1, i_2, \ldots, i_{n+1}\}$ which means  $A_1\cup A_2 = \{i_1, i_2, \ldots, i_{n+1}\}$ because $|A_1\cup A_2|=n+1$. It is clear that no other maximal simplex contains this simplex.  \end{proof}

We need one more result before the discussion of the homotopy types of the complex $\mathcal{VR}(\mathcal{F}_n^m, 2)$. Assume $n\geq 1$.  Fix a number $a\in [m]$, let $\mathcal{S}_a=\{A: A\in\mathcal{F}^m_n \text{ and } a\in A\}$. There is a natural isometric mapping between the metric spaces  $\mathcal{F}_{n-1}^{m-1}$ and  $\mathcal{S}_{a}$. Hence $\mathcal{VR}(\mathcal{F}_{n-1}^{m-1}, 2)$ is homeomorphic to $\mathcal{VR}(\mathcal{S}_{a}, 2)$. Next, we show that the homotopy type  of the star cluster of the latter in $K$ remains the same.

\begin{lemma}\label{homot_sets_contain_1} Let $n, m$ be in $\mathbb{N}$ such that $n<m$. Define  $\mathcal{S}_1=\{A\subset [m]: |A|=n \text{ and }1\in A\}$ and let $L$ be the complex $\mathcal{VR}(\mathcal{S}_1, 2)$. Then $$\text{SC}_{\mathcal{VR}(\mathcal{F}_n^m, 2)}(L)\simeq L.$$
\end{lemma}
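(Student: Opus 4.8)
The plan is to apply Lemma~\ref{SC_homo} directly, with $K=\mathcal{VR}(\mathcal{F}_n^m,2)$ and $L=\mathcal{VR}(\mathcal{S}_1,2)$. Both complexes are Vietoris--Rips complexes built from subsets of the same metric space $(\mathcal{F}_n^m,d)$, so each is a clique complex, and $L$ is precisely the full subcomplex of $K$ spanned by the vertex set $\mathcal{S}_1$; in particular $L$ is a clique subcomplex of $K$. Everything therefore reduces to verifying the single hypothesis of Lemma~\ref{SC_homo}: whenever $v,w\in \mathcal{S}_1$ are distinct vertices with $(\text{st}_K(v)\cap \text{st}_K(w))\setminus L\neq\emptyset$, the edge $\{v,w\}$ lies in $L$. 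Since $v$ and $w$ are distinct $n$-subsets both containing $1$, the edge $\{v,w\}$ belongs to $L$ exactly when $d(v,w)\leq 2$, so the task becomes a purely combinatorial distance estimate.

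To set this up, I would fix a simplex $\sigma\in(\text{st}_K(v)\cap \text{st}_K(w))\setminus L$. Because $\sigma\notin L$ while both $\sigma\cup\{v\}$ and $\sigma\cup\{w\}$ are simplices of $K$, the simplex $\sigma$ must contain at least one vertex $C\in\mathcal{F}_n^m$ with $1\notin C$. As $K$ is a clique complex, the pairs $\{v,C\}$ and $\{w,C\}$ are edges, so $d(v,C)\leq 2$ and $d(w,C)\leq 2$. The key observation is a parity argument: since $|v|=|w|=|C|=n$, every pairwise symmetric difference has even cardinality, and $C$ can equal neither $v$ nor $w$ (these contain $1$, whereas $C$ does not); hence $d(v,C)=d(w,C)=2$.

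The heart of the argument is an explicit description of $C$. From $d(v,C)=2$ together with $1\in v\setminus C$ I would deduce $v\setminus C=\{1\}$ and $C\setminus v=\{x\}$ for some $x\neq 1$, i.e. $C=(v\setminus\{1\})\cup\{x\}$ with $C\setminus\{x\}=v\setminus\{1\}$; symmetrically $C=(w\setminus\{1\})\cup\{y\}$ with $C\setminus\{y\}=w\setminus\{1\}$ for some $y\neq 1$. Writing $v=(C\setminus\{x\})\cup\{1\}$ and $w=(C\setminus\{y\})\cup\{1\}$ and noting $1\notin C$, the common element $1$ cancels, giving $v\Delta w=(C\setminus\{x\})\Delta(C\setminus\{y\})$. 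Since $v\neq w$ we cannot have $x=y$, so this symmetric difference is $\{x,y\}$ and $d(v,w)=2$. Thus $\{v,w\}$ is an edge of $L$, which verifies the hypothesis, and Lemma~\ref{SC_homo}(ii), applied to the full list of vertices of $L$, yields $\text{SC}_K(L)\simeq L$.

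I do not expect a serious obstacle here; the only place demanding care is the distance computation, where the parity observation (forcing $d=2$ rather than merely $d\leq 2$) and the role of the distinguished element $1\in(v\cap w)\setminus C$ are exactly what make the two symmetric differences collapse down to a set of size at most $2$. I would be careful to record the distinctness of $v$ and $w$ so as to rule out $x=y$, and to note explicitly that $C\neq v$ and $C\neq w$ before invoking the parity of the symmetric differences.
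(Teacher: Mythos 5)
Your proposal is correct and follows essentially the same route as the paper: both verify the hypothesis of Lemma~\ref{SC_homo} by showing that any common star-neighbor $C$ of $v,w\in\mathcal{S}_1$ with $1\notin C$ must contain $v\setminus\{1\}$ and $w\setminus\{1\}$, forcing $d(v,w)\leq 2$. The only cosmetic difference is that you argue the implication directly (and in doing so handle the small point, glossed over in the paper, that a simplex of $(\text{st}_K(v)\cap\text{st}_K(w))\setminus L$ must contain a vertex outside $\mathcal{S}_1$), whereas the paper proves the contrapositive.
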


\begin{proof} Let $K=\mathcal{VR}(\mathcal{F}_n^m, 2)$. We'll show that the condition in Lemma~\ref{SC_homo} is satisfied. Then the result follows.

Pick vertices $A$ and $B$ in $L$ such that $\{A, B\}$ is not an edge in $L$, i.e., $|A\Delta B|\geq 4$.  Hence there exist natural numbers $i_1, i_2, j_1$, and $j_2$ such that $\{i_1, i_2\}\subseteq A\setminus B$ and $\{j_1, j_2\}\subseteq B\setminus A$. Suppose, for contradiction, that $(\text{st}_K(A)\cap \text{st}_K(B))\setminus L\neq \emptyset$. We pick a vertex $C\in (\text{st}_K(A)\cap \text{st}_K(B))\setminus L$. Clearly $1\notin C$. We claim that $A\setminus \{1\}\subset C$, otherwise there exists $i_0\neq 1$ such that $i_0\in A\setminus C$, whence $|A\setminus C|\geq 2$ which is a contradiction. Similarly, $B\setminus \{1\}\subset C$. Therefore, $\{i_1, i_2, j_1, j_2\}\subset C$.  Notice that $(A\cap B)\setminus \{1\}$ has size $\leq n-3$. Suppose that $|(A\cap B)\setminus \{1\}|=n-3$. Then  the vertex  $C$ has  $n+1$ elements because $(A\cap B)\setminus \{1\}\subset C$ and $\{i_1, i_2, j_1, j_2\}\subset C$. This means that $C$ has  at least $n+1$ elements which is a contradiction. This finishes the proof.   \end{proof}

Now we are ready to give a complete characterization of  the homotopy types of $\mathcal{VR}(\mathcal{F}_n^m, 2)$.

\begin{theorem}\label{homo_equ_n} Suppose that $1<n<m-1$. The complex $\mathcal{VR}(\mathcal{F}_n^m, 2)$ is homotopy equivalent to a wedge sum of spheres. Specifically, $$\mathcal{VR}(\mathcal{F}_n^m, 2)\simeq
(\bigvee_{{m-1\choose n+1}\cdot {n \choose 2}}S^2) \vee \mathcal{VR}(\mathcal{F}_{n-1}^{m-1}, 2).$$

\end{theorem}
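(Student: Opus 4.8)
The plan is to run an induction that isolates the element $1\in[m]$ and peels off the ``recursive'' summand $\mathcal{VR}(\mathcal{F}_{n-1}^{m-1},2)$ as a star cluster, after which the $2$-spheres appear one maximal simplex at a time. Set $K:=\mathcal{VR}(\mathcal{F}_n^m,2)$, let $\mathcal{S}_1=\{A\in\mathcal{F}^m_n:1\in A\}$, and let $L=\mathcal{VR}(\mathcal{S}_1,2)$. The map $A\mapsto A\setminus\{1\}$ is an isometry of $\mathcal{S}_1$ onto $\mathcal{F}_{n-1}^{m-1}$, so $L\cong\mathcal{VR}(\mathcal{F}_{n-1}^{m-1},2)$, and by Lemma~\ref{homot_sets_contain_1} we have $\text{SC}_K(L)\simeq L\simeq \mathcal{VR}(\mathcal{F}_{n-1}^{m-1},2)$. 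This star cluster will be the base of the wedge.

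First I would determine exactly which simplices of $K$ are missing from $\text{SC}_K(L)$. A simplex $\sigma$ lies in $\text{SC}_K(L)$ precisely when it can be enlarged by a vertex containing $1$, and a direct computation with the symmetric-difference metric shows that (since such a vertex must agree with each $A\in\sigma$ off a single coordinate) this is possible if and only if $|\bigcap\sigma|\ge n-1$. Hence the simplices outside $\text{SC}_K(L)$ are those whose vertices all avoid $1$ and satisfy $|\bigcap\sigma|\le n-2$; as every edge has $|\bigcap|=n-1$, such $\sigma$ has at least three vertices, and Lemma~\ref{max_simp}(ii) then places it inside a unique maximal simplex $L[S]$ with $S$ an $(n+1)$-subset of $\{2,\dots,m\}$. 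Since a $j$-face of $L[S]$ has common intersection of size $n-j$, exactly the faces of dimension $\ge 2$ are missing, so $\text{SC}_K(L)\cap L[S]=L[S]^{(1)}$ and
$$K=\text{SC}_K(L)\cup\bigcup_{S}L[S],$$
the union taken over the $\binom{m-1}{n+1}$ such sets $S$.

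Next I would attach these maximal simplices one at a time: put $K_0=\text{SC}_K(L)$ and $K_i=K_{i-1}\cup L[S_i]$. The crucial geometric fact is that two distinct simplices $L[S],L[S']$ share at most one vertex (they contain a common $n$-set only when $|S\cap S'|=n$), and that vertex already lies in $\text{SC}_K(L)$; consequently $K_{i-1}\cap L[S_i]=L[S_i]^{(1)}$ for every ordering. As $L[S_i]$ is an $n$-simplex, Lemma~\ref{subcom_simplex} gives $L[S_i]^{(1)}\simeq\bigvee_{\binom{n}{2}}S^1$. To invoke Lemma~\ref{cup_simp} I must check both inclusions of this intersection are null-homotopic: into the contractible $L[S_i]$ this is automatic, while into $K_{i-1}$ it follows from simple-connectivity, since by the strong inductive hypothesis $\text{SC}_K(L)\simeq\mathcal{VR}(\mathcal{F}_{n-1}^{m-1},2)$ is a wedge of $2$-spheres and $K_{i-1}$ is obtained from it by wedging on further $2$-spheres, so any map out of the $1$-dimensional $L[S_i]^{(1)}$ contracts. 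Lemma~\ref{cup_simp} then yields $K_i\simeq K_{i-1}\vee\Sigma\big(L[S_i]^{(1)}\big)\simeq K_{i-1}\vee\bigvee_{\binom{n}{2}}S^2$, and iterating over all $\binom{m-1}{n+1}$ choices of $S$ produces
$$K\simeq\text{SC}_K(L)\vee\bigvee_{\binom{m-1}{n+1}\binom{n}{2}}S^2\simeq \mathcal{VR}(\mathcal{F}_{n-1}^{m-1},2)\vee\bigvee_{\binom{m-1}{n+1}\binom{n}{2}}S^2.$$

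The main obstacle I anticipate is the null-homotopy of $L[S_i]^{(1)}\hookrightarrow K_{i-1}$. Unlike the contractibility arguments used for the star clusters, this step genuinely needs the partially built complex to be simply connected, so ``homotopy equivalent to a wedge of $S^2$'s'' must be carried along as part of the inductive statement (the base cases being $n=2$, where $\text{SC}_K(L)\simeq\mathcal{VR}(\mathcal{F}_1^{m-1},2)$ is contractible and one recovers Barmak's count). The remaining work is bookkeeping: verifying the extendability criterion $|\bigcap\sigma|\ge n-1$ carefully, and confirming that distinct maximal $L$-simplices overlap only in single vertices so that each successive intersection is exactly one $1$-skeleton $L[S_i]^{(1)}$.
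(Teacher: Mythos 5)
Your proposal is correct and follows essentially the same route as the paper: the same subcomplex $L=\mathcal{VR}(\mathcal{S}_1,2)$ with $\text{SC}_K(L)\simeq L$ via Lemma~\ref{homot_sets_contain_1}, the same identification of the leftover maximal simplices as the $\binom{m-1}{n+1}$ simplices $L[S]$ with $1\notin S$, the same one-at-a-time attachment with intersection $L[S_i]^{(1)}\simeq\bigvee_{\binom{n}{2}}S^1$, and the same appeal to Lemmas~\ref{cup_simp}, \ref{subcom_simplex}, and \ref{max_simp}(ii), with simple connectivity of the partially built complex carried through the induction. Your closed-form criterion $|\bigcap\sigma|\ge n-1$ for membership in $\text{SC}_K(L)$ is a slightly cleaner packaging of the paper's skeleton-by-skeleton argument, but the content is the same.
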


\begin{proof} Notice that the complex $\mathcal{VR}(\mathcal{F}_{1}^{m-1}, 2)$ is contractible. Hence the result holds when $n=2$ by Barmak's result mentioned above.

Assume that $n>2$ and $\mathcal{VR}(\mathcal{F}_{n-1}^{m-1}, 2)$ is homotopic to a wedge sum of spheres $S^2$. We denote $K=\mathcal{VR}(\mathcal{F}_n^m, 2)$. As in Lemma~\ref{homot_sets_contain_1}, let $\mathcal{S}_1=\{A\subset [m]: |A|=n \text{ and }1\in A\}$ and $L$ be the complex $\mathcal{VR}(\mathcal{S}_1, 2)$. Then, the complex $L$ is homotopy equivalent to $\mathcal{VR}(\mathcal{F}_{n-1}^{m-1}, 2)$ which is a wedge sum of $S^2$'s by the assumption. Also by Lemma~\ref{homot_sets_contain_1}, the star cluster SC$_K(L)$ is homotopy equivalent to $L$.

Now we examine the collection of maximal simplices in $K$ to decide which of them is not in SC$_K(L)$. Notice that any maximal simplex in the form $N[i_1, i_2, \ldots, i_{n-1}]$ or $L[1, i_1, \ldots, i_n]$ contains at least one vertex containing $1$ for any $i_1, i_2, \ldots, i_{n}\in [m]$; hence any such simplex is in SC$_K(L)$. Therefore  in the complement of  $\text{SC}_K(L)$, namely $K\setminus \text{SC}_K(L)$,  there is only one kind of maximal simplicies in the form $L[i_1, i_2, \ldots, i_{n+1}]$ with $i_k\neq 1$ for any $k=1, 2, \ldots, n+1$; and  there are ${m-1\choose n+1}$-many such simplices  and list them as $\{\sigma_1, \sigma_2, \ldots, \sigma_{m-1\choose n+1}\}$. Here, $K_{\sigma_\ell}$ is the complex generated by $\sigma_\ell$ for each $\ell=1, 2, \ldots, {m-1\choose n+1}$.

For each $\ell$ with $1\leq \ell\leq {m-1\choose n+1}$, we denote $L_
\ell$ to be the complex whose maximal simplices are $\{\sigma_j: j=1, 2, \ldots, \ell\}$. Hence the complex $L_{m-1\choose n+1}$ is the complex $\mathcal{VR}(\mathcal{S}_2, 2)$ where $\mathcal{S}_2$ is the collection of $n$-subsets of $[m]$ not containing $1$. Therefore, $K=\text{SC}_K(L)\cup L_{m-1\choose n+1}$.

We claim that $\text{SC}_K(L)\cup L_\ell$ is homotopic to $(\bigvee_{\ell \cdot {n \choose 2}}S^2) \vee \mathcal{VR}(\mathcal{F}_{n-1}^{m-1}, 2)$ for each $\ell=1, 2, \ldots, {m-1\choose n+1}$. This claim finishes the proof. Next, we'll prove this claim by induction. For convenience, denote $L_0=\emptyset$.

Suppose, for induction, that $$\text{SC}_K(L)\cup L_{\ell-1}\simeq (\bigvee_{(\ell-1) \cdot {n \choose 2}}S^2) \vee \mathcal{VR}(\mathcal{F}_{n-1}^{m-1}, 2).$$ This holds when $\ell=1$ since $L_0=\emptyset$. Then $\text{SC}_K(L)\cup L_{\ell}=\text{SC}_K(L)\cup L_{\ell-1}\cup \{K_{\sigma_\ell}\}$. Denote $\sigma_\ell$ to be $L[i_1, i_2, \ldots, i_{n+1}]$ where $i_k\neq 1$ for each $k=1, 2, \ldots, n+1$. Next we'll find the homotopy type of  $(\text{SC}_K(L)\cup L_{\ell-1})\cap \{K_{\sigma_\ell}\}$.

For any vertex $B\in K_{\sigma_\ell}$, $B\in L[\{1\}\cup B]\subset \text{SC}_K(L)$. Hence the $0$-skeleton of $K_{\sigma_\ell}$ is contained in $\text{SC}_K(L)$.  Let $\{B_1, B_2\}$ be a $1$-simplex in $K_{\sigma_\ell}$. Then $|B_1\cap B_2|=n-1$. Because $N[B_1\cap B_2]$ is in  $\text{SC}_K(L)$, the edge $\{B_1, B_2\}$ is in $\text{SC}_K[L]$. So the $1$-skeleton $K_{\sigma_\ell}^{(1)}$ of $K_{\sigma_\ell}$ is also contained in $\text{SC}_K(L)$. Moreover, any $k$-simplex with $k\geq 2$ in $K_{\sigma_\ell}$ is not in  $\text{SC}_{K}(L)$; otherwise such a $k$-simplex would be contained in a maximal simplex which has a vertex containing $1$ and hence is different from $\sigma_\ell$.  This leads to a contradiction by ii) in Lemma~\ref{max_simp}. For any $\ell'=1, 2, \ldots, \ell-1$, the intersection of the complexes $K_{\sigma_{\ell'}}$ and $K_{\sigma_\ell}$ contains at most one vertex because of their definitions. Therefore, $(\text{SC}_K(L)\cup L_{\ell-1})\cap K_{\sigma_\ell}=K_{\sigma_\ell}^{(1)}$. Recall that $\sigma_\ell$ is an $n$-simplex, hence $K_{\sigma_\ell}^{(1)}$ is homotopy equivalent to a wedge sum of  ${n\choose 2}$-many copies of $S^1$'s by Lemma~\ref{subcom_simplex}.

Notice that $K_{\sigma_\ell}^{(1)}$ is null-homotopic in $K_{\sigma_\ell}$ because $K_{\sigma_\ell}$ is contractible. Also,  $K_{\sigma_\ell}^{(1)}$ is null-homotopic in $\text{SC}_K(L)\cup L_{\ell-1}$ because the homotopy type of  former is a wedge sum of $S^1$'s and the homotopy type of latter is a wedge sum of $S^2$'s. Therefore by Lemma~\ref{cup_simp}, $\text{SC}_K(L)\cup L_\ell$ is homotopy equivalent to $\Sigma(\bigvee_{n\choose 2}S^1)\vee (\text{SC}_K(L)\cup L_{\ell-1})$ which is by inductive assumption $(\vee_{\ell {n\choose 2}}S^2)\vee \text{SC}_K(L)$.  This finishes the proof because $\text{SC}_K(L)\simeq L\simeq \mathcal{VR}(\mathcal{F}_{n-1}^{m-1}, 2)$.
\end{proof}

By an inductive calculation, we obtain the following corollary.
\begin{cor}\label{homot_n_2} Suppose that $1<n<m-1$. The  complex  $\mathcal{VR}(\mathcal{F}_n^m, 2)$ is homotopy equivalent to a wedge sum of $\sum_{k=2}^n{m+k-1-n\choose k+1}{k\choose 2}$-many copies of $S^2$'s.
\end{cor}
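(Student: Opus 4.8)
The plan is to recognize Corollary~\ref{homot_n_2} as a purely combinatorial consequence of the inductive formula in Theorem~\ref{homo_equ_n}: unroll the recursion and perform a single index substitution. Let $N(m,n)$ denote the number of copies of $S^2$ in the wedge decomposition of $\mathcal{VR}(\mathcal{F}_n^m,2)$. Theorem~\ref{homo_equ_n} yields the recurrence
$$N(m,n)=\binom{m-1}{n+1}\binom{n}{2}+N(m-1,n-1),$$
valid for $1<n<m-1$, while the case $n=2$ is pinned down by Barmak's result, giving $N(m,2)=\binom{m-1}{3}$. I would run the argument as an induction on $n$, the induction bottoming out at $n=2$ (at which point $\mathcal{VR}(\mathcal{F}_{n-1}^{m-1},2)=\mathcal{VR}(\mathcal{F}_{1}^{m-1},2)$ is contractible and contributes no spheres).

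For the base case $n=2$, I would simply check that the claimed sum $\sum_{k=2}^{2}\binom{m+k-1-n}{k+1}\binom{k}{2}$ collapses to its single $k=2$ term $\binom{m-1}{3}\binom{2}{2}=\binom{m-1}{3}$, matching Barmak. For the inductive step, assume the formula holds for $\mathcal{VR}(\mathcal{F}_{n-1}^{m-1},2)$; the crucial bookkeeping point is that the shifts $m\mapsto m-1$ and $n\mapsto n-1$ leave the summand invariant, since $(m-1)+k-1-(n-1)=m+k-1-n$, so that
$$N(m-1,n-1)=\sum_{k=2}^{n-1}\binom{m+k-1-n}{k+1}\binom{k}{2}.$$
Substituting this into the recurrence, the observation that makes everything line up is that the extra term $\binom{m-1}{n+1}\binom{n}{2}$ produced by Theorem~\ref{homo_equ_n} is exactly the $k=n$ term of the target sum: setting $k=n$ in $\binom{m+k-1-n}{k+1}\binom{k}{2}$ returns $\binom{m-1}{n+1}\binom{n}{2}$. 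Adjoining it to the inductive sum (which ranges over $k=2,\dots,n-1$) extends the range to $k=2,\dots,n$, producing precisely $\sum_{k=2}^{n}\binom{m+k-1-n}{k+1}\binom{k}{2}$, as required.

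There is no genuine obstacle here beyond careful bookkeeping with the binomial coefficients; the only points demanding attention are the index substitution (equivalently, verifying the invariance of the summand under the $m\mapsto m-1$, $n\mapsto n-1$ shift) and confirming that the $\binom{k}{2}=0$ convention terminates the recursion cleanly at $k=2$, consistent with the contractibility of $\mathcal{VR}(\mathcal{F}_{1}^{m-1},2)$ noted in the base case.
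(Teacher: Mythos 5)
Your proposal is correct and is precisely the ``inductive calculation'' the paper alludes to: unrolling the recurrence from Theorem~\ref{homo_equ_n} with base case $N(m,2)=\binom{m-1}{3}$ from Barmak's result, noting that the summand $\binom{m+k-1-n}{k+1}\binom{k}{2}$ is invariant under the shift $(m,n)\mapsto(m-1,n-1)$ and that the new term $\binom{m-1}{n+1}\binom{n}{2}$ is exactly the $k=n$ summand. The bookkeeping all checks out, so this matches the paper's (unwritten) argument.
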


\section{Vietoris-Rips Complex $\mathcal{VR}(\mathcal{F}_{\preceq A}^{m},2)$}\label{F_A_m}

In this section, we'll determine the homotopy type of $\mathcal{VR}(\mathcal{F}_{\preceq A}, 2)$ for $A\in \mathcal{P}([m])$ with $|A|=n$.

As in the discussion in Section~\ref{Intro}, $\mathcal{VR}(\mathcal{F}^m_{\leq r}, r)$ is a cone with cone vertex being the empty set, hence contractible;
and similarly $\mathcal{VR}(\mathcal{F}^m_{\geq m-r}, r)$ is also contractible.  Hence, for any $A\subset [m]$ with $|A|\leq 2$, the complex $\mathcal{VR}(\mathcal{F}^m_{\preceq A}, 2)$ is contractible. So in this section, we will discuss the homotopy type of $\mathcal{VR}(\mathcal{F}^m_{\preceq A}, 2)$ with $|A|\geq 3$.

The following lemma is easy to prove, but heavily used in the discussion of $\mathcal{VR}(\mathcal{F}_{\preceq A}^m, 2)$.

\begin{lemma}\label{subset_dist2} For any $A, B\in \mathcal{P}[m]$ with $|A|<|B|$, $d(A, B)\leq 2$ if and only if $A\subset B$ and $|B\setminus A|\leq 2$.

\end{lemma}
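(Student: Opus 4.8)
The plan is to prove both directions directly from the definition of the symmetric difference metric $d(A,B)=|A\Delta B|$, using the decomposition $A\Delta B = (A\setminus B)\cup (B\setminus A)$ into two disjoint pieces. First I would record the basic counting identity: since $A\setminus B$ and $B\setminus A$ are disjoint, $d(A,B)=|A\setminus B|+|B\setminus A|$. I would also note the hypothesis $|A|<|B|$, which will be used to control the relative sizes of the two pieces.

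For the backward direction, assume $A\subset B$ and $|B\setminus A|\leq 2$. Then $A\setminus B=\emptyset$, so $d(A,B)=|B\setminus A|\leq 2$, giving the conclusion immediately.

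For the forward direction, assume $d(A,B)\leq 2$, i.e.\ $|A\setminus B|+|B\setminus A|\leq 2$. The key observation is that $|B\setminus A|-|A\setminus B|=|B|-|A|\geq 1$, since $|B|>|A|$; this follows from $|B|=|A\cap B|+|B\setminus A|$ and $|A|=|A\cap B|+|A\setminus B|$. Combining $|B\setminus A|\geq |A\setminus B|+1$ with $|A\setminus B|+|B\setminus A|\leq 2$ forces $2|A\setminus B|+1\leq |A\setminus B|+|B\setminus A|\leq 2$, hence $|A\setminus B|\leq 1/2$, so $|A\setminus B|=0$. Thus $A\subset B$, and then $d(A,B)=|B\setminus A|\leq 2$ gives $|B\setminus A|\leq 2$, completing the proof.

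This lemma is entirely elementary, so I do not expect any genuine obstacle; the only point requiring a moment of care is the forward direction, where one must exploit the strict size inequality $|A|<|B|$ to rule out the case $|A\setminus B|=|B\setminus A|=1$ (which would give $d(A,B)=2$ without the containment $A\subset B$). The size parity argument above handles this cleanly.
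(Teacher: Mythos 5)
Your proof is correct and follows essentially the same route as the paper: both directions rest on the disjoint decomposition $d(A,B)=|A\setminus B|+|B\setminus A|$ together with the observation that $|B|>|A|$ forces $|B\setminus A|>|A\setminus B|$, which rules out $A\setminus B\neq\emptyset$ when $d(A,B)\leq 2$. The paper phrases the forward direction as a contrapositive ($|A\setminus B|\geq 1$ implies $|B\setminus A|\geq 2$, hence $d\geq 3$) while you argue directly, but the content is identical.
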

\begin{proof} If $A\subset B$ and $|B\setminus A|\leq 2$, then $d(A, B)=|(A\setminus B)\cup (B\setminus A)|\leq 2$.

Now we suppose $A\setminus B\neq \emptyset$, i.e. $|A\setminus B|\geq 1$. Since $|A|<|B|$, $|B\setminus A|\geq 2$, therefore $d(A, B)\geq 3$. If $A\subset B$ and $|B\setminus A|>2$, then $d(A, B)=|B\setminus A|>2$. This finishes the proof.
\end{proof}

Next, we'll discuss the homotopy type of $\mathcal{VR}(\mathcal{F}_n^m\cup \mathcal{F}_{n+1}^m, 2)$ using a similar approach as in the proof of Theorem~\ref{homo_equ_n}.

\begin{theorem}\label{homot_wide_2_2} Suppose that $1<n<m-1$. Then the complex $\mathcal{VR}(\mathcal{F}_n^m\cup \mathcal{F}_{n+1}^m, 2)$ is homotopy equivalent to a wedge sum of $(\sum_{k=2}^n {m+k-1-n\choose k+1}\cdot{k\choose 2}+{m\choose n+2}\cdot {n+1\choose 2})$-many copies of $S^2$.
\end{theorem}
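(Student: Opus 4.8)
The plan is to mimic the inductive structure of the proof of Theorem~\ref{homo_equ_n}, but now splitting off the vertices of $\mathcal{F}_{n+1}^m$ rather than the subsets containing $1$. First I would set $K=\mathcal{VR}(\mathcal{F}_n^m\cup \mathcal{F}_{n+1}^m, 2)$ and take $K_2=\mathcal{VR}(\mathcal{F}_n^m, 2)$, the full subcomplex on the $n$-subsets, whose homotopy type is already determined by Corollary~\ref{homot_n_2} to be a wedge of $\sum_{k=2}^n\binom{m+k-1-n}{k+1}\binom{k}{2}$ copies of $S^2$. By Lemma~\ref{subset_dist2}, an $n$-set $A$ and an $(n+1)$-set $B$ are within distance $2$ exactly when $A\subset B$, so each $(n+1)$-set $B$ is adjacent precisely to its $n+1$ subsets $L[B]$, and two $(n+1)$-sets are never within distance $2$ of each other (their symmetric difference is at least $2$, and if it equals $2$ they differ in one element giving distance $2$—I must check this boundary case carefully). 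The aim is to add the $(n+1)$-sets one at a time and apply Lemma~\ref{cup_simp} at each stage.

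The key computation is the intersection of the cone over a new vertex $B\in\mathcal{F}_{n+1}^m$ with the already-built complex. I would use that $\mathrm{st}_K(B)$ is the cone $B\ast K_{L[B]}$, where $L[B]$ is the $n$-simplex on the $n+1$ subsets of $B$ and $K_{L[B]}$ is the full complex it generates inside $\mathcal{VR}(\mathcal{F}_n^m,2)$. The crucial claim is that when we glue $\mathrm{st}_K(B)$ onto the union of $K_2$ and the previously added stars, the intersection is exactly the $1$-skeleton $K_{L[B]}^{(1)}$: every vertex and edge of $L[B]$ already lies in $K_2$ (an edge $\{C,C'\}$ with $C,C'\subset B$, $|C|=|C'|=n$ has $|C\cap C'|=n-1$ and sits in $N[C\cap C']$), while no $2$-simplex of $L[B]$ lies in $K_2$, since three distinct $n$-subsets of $B$ have intersection of size $n-2<n-1$, and by Lemma~\ref{max_simp}(ii) the only maximal simplex of $\mathcal{VR}(\mathcal{F}_n^m,2)$ containing such a triple would be some $L[D]$ with $|D|=n+1$; I would argue this forces $D=B\notin\mathcal{F}_n^m$, a contradiction, so the triple is not a simplex in $K_2$. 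I also need that distinct added $(n+1)$-sets contribute stars whose pairwise intersections meet the new star in at most the shared part of these $1$-skeletons, which holds because two different $(n+1)$-sets share at most $n-1$ common elements and hence at most one common $n$-subset.

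Granting the intersection is $K_{L[B]}^{(1)}$, Lemma~\ref{subcom_simplex} gives $K_{L[B]}^{(1)}\simeq\bigvee_{\binom{n+1}{2}}S^1$. This intersection is null-homotopic in $\mathrm{st}_K(B)$ (a cone) and null-homotopic in the previously built complex, since by induction that complex is a wedge of $S^2$'s and any map $S^1\to\bigvee S^2$ is null-homotopic (as noted in Section~\ref{prel}). Lemma~\ref{cup_simp} then yields that attaching $B$ adds a suspension $\Sigma\bigl(\bigvee_{\binom{n+1}{2}}S^1\bigr)\simeq\bigvee_{\binom{n+1}{2}}S^2$. Since there are $\binom{m}{n+1}$ sets in $\mathcal{F}_{n+1}^m$—wait, the target count uses $\binom{m}{n+2}$, so I would recompute: the count $\binom{m}{n+2}\binom{n+1}{2}$ suggests the induction is instead on the larger-cardinality side, adding $(n+1)$-sets to the contractible cone built from $\mathcal{F}_{n+1}^m$-local data, so I would first verify which side is the full subcomplex giving the cleaner count and match the arithmetic accordingly.

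The main obstacle I anticipate is pinning down the intersection precisely at each inductive step: specifically proving that no $2$-simplex of a new $L[B]$ has slipped into the union of earlier stars (not just into $K_2$), which requires the careful combinatorial analysis via Lemma~\ref{max_simp}(ii) of exactly which maximal simplices of $K$ can contain a triple of $n$-subsets of $B$, and confirming that the only such maximal simplex is the cone on $B$ itself. Reconciling the final wedge count $\bigl(\sum_{k=2}^n\binom{m+k-1-n}{k+1}\binom{k}{2}+\binom{m}{n+2}\binom{n+1}{2}\bigr)$ with the per-step contribution of $\binom{n+1}{2}$ spheres will then be a bookkeeping matter of identifying the correct index set of added facets.
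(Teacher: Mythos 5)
Your proposal contains a genuine gap, and it stems from a false premise that you yourself flag but never resolve: two distinct $(n+1)$-sets $B_1,B_2$ with $|B_1\cap B_2|=n$ have $|B_1\Delta B_2|=2$, hence \emph{are} at distance $2$ and adjacent in $K$. The subcomplex on $\mathcal{F}_{n+1}^m$ is therefore not discrete; it is a copy of $\mathcal{VR}(\mathcal{F}_{n+1}^m,2)$, with its own maximal simplices of both types $N[i_1,\ldots,i_n]$ and $L[i_1,\ldots,i_{n+2}]$ by Lemma~\ref{max_simp}. Consequently $\mathrm{st}_K(B)$ is not the cone $B\ast K_{L[B]}$ over the $n+1$ subsets of $B$: the link of $B$ also contains the $(n+1)(m-n-1)$ other $(n+1)$-sets sharing $n$ elements with $B$. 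Your vertex-by-vertex induction over $\mathcal{F}_{n+1}^m$ therefore computes the wrong intersections at every step, and the count mismatch you notice at the end ($\binom{m}{n+1}$ vertices added versus the $\binom{m}{n+2}$ in the target formula) is a symptom of this, not a bookkeeping issue to be patched later.

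The paper's proof repairs exactly this. It takes $K_0=\mathcal{VR}(\mathcal{F}_n^m,2)$ and forms the star cluster $\mathrm{SC}_K(K_0)$, verifying the hypothesis of Lemma~\ref{SC_homo} via Lemma~\ref{subset_dist2} (if an $(n+1)$-set lies in $\mathrm{st}_K(D)\cap\mathrm{st}_K(D')$ for $n$-sets $D,D'$, both are its subsets, so $d(D,D')=2$), giving $\mathrm{SC}_K(K_0)\simeq K_0$. This star cluster already swallows every maximal simplex of $\mathcal{VR}(\mathcal{F}_{n+1}^m,2)$ of type $N[i_1,\ldots,i_n]$, because $\{i_1\cdots i_n\}\cup N[i_1,\ldots,i_n]$ is a simplex of $K$. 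What remains to be glued are only the $\binom{m}{n+2}$ facets $L[i_1,\ldots,i_{n+2}]$ --- each an $(n+1)$-simplex whose vertices are $(n+1)$-sets, one for each $(n+2)$-subset of $[m]$ --- and each meets the previously built complex in exactly its $1$-skeleton (an edge $\{B_1,B_2\}$ is coned off by the $n$-set $B_1\cap B_2$, while no $2$-face lies in the union by Lemma~\ref{max_simp}(ii)). Each such gluing contributes $\Sigma\bigl(\bigvee_{\binom{n+1}{2}}S^1\bigr)\simeq\bigvee_{\binom{n+1}{2}}S^2$ by Lemma~\ref{cup_simp}, which is where $\binom{m}{n+2}\cdot\binom{n+1}{2}$ comes from. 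Your instinct about the overall machinery (star clusters, $1$-skeleton intersections, suspension via Lemma~\ref{cup_simp}) is right, but the induction must run over the $L$-type facets of the upper layer, not over its vertices.
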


\begin{proof} Let $K=\mathcal{VR}(\mathcal{F}_n^m\cup \mathcal{F}_{n+1}^m, 2)$ and $K_0=\mathcal{VR}(\mathcal{F}_n^m, 2)$. By Corollary~\ref{homot_n_2}, the complex $K_0$ is homotopy equivalent to a wedge sum of  $\sum_{k=2}^n {m+k-1-n\choose k+1}\cdot{k\choose 2}$-many copies of $S^2$'s.

We claim that $\text{SC}_K(K_0)\simeq K_0$. We proceed to show that the condition in Lemma~\ref{SC_homo} is satisfied. Hence this claim holds. Take a $B\in \mathcal{F}_{n+1}^m$ such that $B\in \text{st}_K(D)\cap \text{st}_K(D')$ for $D, D'\in \mathcal{F}_n^m$. Then $d(B, D)=d(B, D')=2$, hence by Lemma~\ref{subset_dist2}, $D, D'$ are both subsets of $B$ which implies that $d(D, D')=2$. This finishes the proof of the claim.

By Lemma~\ref{max_simp}, there are two types of maximal simplicies in $\mathcal{VR}(\mathcal{F}_{n+1}^m, 2)$.  If $\sigma$ is a maximal simplex $\mathcal{VR}(\mathcal{F}_{n+1}^m, 2)$ which can be represented in the form $N[i_1, i_2, \ldots, i_n]$, clearly $\{i_1i_2\cdots i_n\}\cup N[i_1, i_2, \ldots, i_n]$ is a simplex in $K$; hence $N[i_1, i_2, \ldots, i_n]\in\text{SC}_K(K_0)$.

Now we look at the second type of maximal simplices in $\mathcal{VR}(\mathcal{F}_{n+1}^m, 2)$. There are ${m\choose n+2}$-many type of maximal simplicies in $\mathcal{VR}(\mathcal{F}_{n+1}^m, 2)$ which are in the form $L[i_1, i_2, \ldots, i_{n+2}]$; and list such $(n+1)$-simplicies as $\{\sigma_1, \sigma_2, \ldots, \sigma_{{m\choose n+2}}\}$. Denote $L_\ell =\text{SC}_K(K_0)\cup \bigcup_{j=1}^\ell K_{\sigma_j}$ for $\ell=1, 2, \ldots, {m\choose n+2}$. Recall that the complex $K_{\sigma_j}$ is the complex generated by the simplex $\sigma_j$ for $j=1, 2, \ldots, {m\choose n+2}$.

Assume for induction that $L_{\ell-1}$ is homotopic to $$\bigvee_{\sum_{k=2}^n {m+k-1-n\choose k+1}\cdot{k\choose 2}+(\ell-1)\cdot {n+1\choose 2}}S^2.$$
This is clearly true when $\ell=1$. We claim that $L_{\ell-1}\cap K_{\sigma_\ell} =K_{\sigma_\ell}^{(1)}$ which is homotopic to $\bigvee_{n+1\choose 2}S^1$ and hence is  null-homotopic in both $L_{\ell-1}$ and $K_{\sigma_\ell}$. By Lemma~\ref{cup_simp}, this implies that $L_\ell$ is homotopy equivalent to a wedge sum of $(\sum_{k=2}^n {m+k-1-n\choose k+1}\cdot{k\choose 2}+\ell\cdot {n+1\choose 2})$-many $S^2$. This finishes the proof.  Next, we'll prove our claim.

By part ii) of Lemma~\ref{max_simp}, any $2$-simplex in $K_{\sigma_\ell}$ is not in $L_{\ell-1}$. Let $\{B_1, B_2\}$ be a $1$-simplex in $K_{\sigma_\ell}$. Then $B_1\cap B_2$ is an $n$-subset, i.e., a vertex in $K_0$; so $\{B_1, B_2, B_1\cap B_2\}$ is a $2$-simplex in $K$ which means $\{B_1, B_2\}\in \text{st}_K(B_1\cap B_2)$. This shows that $L_{\ell-1}\cap K_{\sigma_\ell} =K_{\sigma_\ell}^{(1)}$. \end{proof}

To identify the homotopy types of $K=\mathcal{VR}(\mathcal{F}^m_{\preceq A}, 2)$ with $|A|\geq 3$, we'll use Lemma~\ref{complex_add_1v} by taking the vertex $A$ so that $K= (K\setminus A)\cup \text{st}_K(A)$. So the key is to understand the link of $A$ in $K$, lk$_K(A)$. Next lemma shows that lk$_K(A)$ is a wedge sum of $S^2$'s.

Note that when $n=3$, $\sum_{k=2}^{n-2}{k\choose 2}$ is set to be $0$ as introduced in Section~\ref{prel}.
\begin{lemma}\label{lk_precA_whole} Suppose that $m\geq n>2$ and $A=i_1i_2\cdots i_n\in \mathcal{P}([m])$.




Denote $i_0=-1$ and define $d_\ell=i_\ell-(i_{\ell-1}+1)$ for each $\ell=1, 2, \dots, n$.  Then

$$\text{lk}_{\mathcal{VR}(\mathcal{F}^m_{\preceq A},2)}(A)\simeq \bigvee_{\sum_{k=2}^{n-2}{k\choose 2}+\sum_{\ell=1}^{n-2}d_{\ell}\cdot {n-\ell \choose 2}} S^2.$$

\end{lemma}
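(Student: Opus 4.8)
The plan is to compute the link directly as a full subcomplex of a Vietoris--Rips complex and then assemble it from a well-understood piece by attaching a controlled family of simplices, in the spirit of the proof of Theorem~\ref{homo_equ_n}. Write $\Lambda=\text{lk}_{\mathcal{VR}(\mathcal{F}^m_{\preceq A},2)}(A)$. Because a Vietoris--Rips complex is a clique complex, $\Lambda$ is the full subcomplex on the set $V$ of vertices $B\prec A$ with $d(A,B)\le 2$. First I would pin down $V$ using Lemma~\ref{subset_dist2} and the order $\prec$: it consists of the $(n-1)$-subsets of $A$ (the vertices of $L[A]$), the $(n-2)$-subsets of $A$, and the single swaps $B=(A\setminus\{i_j\})\cup\{t\}$ with $t\notin A$, where the condition $t<i_j$ is exactly what forces $B\prec A$. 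I would then record the adjacencies by symmetric-difference counts: the $(n-1)$-subsets span the simplex $L[A]$; a swap is joined to exactly one $(n-1)$-subset, to the $(n-2)$-subsets $A\setminus\{i_j,i_q\}$, and to swaps sharing either the removed index $j$ or the new value $t$. The useful outcome is that the swaps sharing a value $t$ lying in the $\ell$-th gap $i_{\ell-1}<t<i_\ell$ are precisely the vertices of $L[A\cup\{t\}]$ that precede $A$, and they span an $(n-\ell)$-simplex $\Sigma_t:=L[A\cup\{t\}]\cap\Lambda$.

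The strategy is to build $\Lambda$ by starting from the full subcomplex $\Lambda_0$ on the subsets of $A$ (the $(n-1)$- and $(n-2)$-subsets) and then adjoining the simplices $\Sigma_t$ one at a time, applying Lemma~\ref{cup_simp} at each step. The complex $\Lambda_0$ is isometric, via complementation inside $A$, to $\mathcal{VR}(\mathcal{F}_1^n\cup\mathcal{F}_2^n,2)$, whose homotopy type I would compute by the same star-cluster induction as in Theorem~\ref{homo_equ_n}, obtaining the term $\sum_{k=2}^{n-2}\binom{k}{2}$ (together with a low-dimensional boundary contribution tracked below). For each attachment of a $\Sigma_t$, the spheres arise exactly as in Theorem~\ref{homo_equ_n}: I would verify, using part~ii) of Lemma~\ref{max_simp}, that no $2$-face of $\Sigma_t$ lies outside $\Sigma_t$, and that every edge of $\Sigma_t$ is already present in the growing complex because each edge $\{B,B'\}$ of $\Sigma_t$ is filled through a suitable $(n-2)$-subset $A\setminus\{i_j,i_{j'}\}$ (the triple spans a $2$-simplex). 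Hence $\Sigma_t$ meets the previously built complex in exactly its $1$-skeleton.

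By Lemma~\ref{subcom_simplex}, the $1$-skeleton of the $(n-\ell)$-simplex $\Sigma_t$ is homotopy equivalent to $\bigvee_{\binom{n-\ell}{2}}S^1$, which is null-homotopic both in the contractible $\Sigma_t$ and in the previously built complex (a wedge of $S^2$'s, whose fundamental group is trivial). Lemma~\ref{cup_simp} then contributes $\Sigma\big(\bigvee_{\binom{n-\ell}{2}}S^1\big)=\bigvee_{\binom{n-\ell}{2}}S^2$ at each stage. Summing over the $d_\ell$ admissible values of $t$ in the $\ell$-th gap, and over $\ell$, produces the term $\sum_{\ell=1}^{n-2}d_\ell\binom{n-\ell}{2}$, and together with the core contribution one recovers the stated total $\sum_{k=2}^{n-2}\binom{k}{2}+\sum_{\ell=1}^{n-2}d_\ell\binom{n-\ell}{2}$.

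I expect the main obstacle to be the combinatorial bookkeeping of the attaching maps, rather than any single homotopy-theoretic step. Specifically, I must fix an order of attachment for the $\Sigma_t$ and verify uniformly that, at the moment each is adjoined, its intersection with the current complex is precisely its $1$-skeleton (every $1$-face already present, no higher face present), which requires combining part~ii) of Lemma~\ref{max_simp} with a careful identification of which $(n-2)$-subsets of $A$ already sit in the built complex. A second delicate point is matching the count exactly: the naive geometric grouping of spheres (one contractible simplex $L[A]$, a core, and the swaps) and the grouping encoded by the two sums in the statement differ in the lowest gap, and reconciling this is exactly where the convention $i_0=-1$ (which sets $d_1=i_1$ rather than $i_1-1$) must be shown to absorb the residual sphere coming from the subsets-of-$A$ core.
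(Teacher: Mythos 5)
Your setup matches the paper's proof in all its combinatorial essentials: the identification of the vertex set of the link (the $(n-1)$- and $(n-2)$-subsets of $A$ together with the swaps $(A\setminus\{i_j\})\cup\{t\}$ with $t<i_j$), the grouping of the swaps by the added value $t$, the observation that the swaps with a fixed $t$ in the $\ell$-th gap span an $(n-\ell)$-simplex, and the bookkeeping by which the convention $d_1=i_1$ absorbs the extra ${n-1\choose 2}$ coming from the core on the subsets of $A$. The gap is in the gluing step. You propose to realize the link as the union $\Lambda_0\cup\bigcup_t\Sigma_t$ and to apply Lemma~\ref{cup_simp} to each adjunction of a $\Sigma_t$, asserting that $\Sigma_t$ meets the previously built complex in exactly its $1$-skeleton because each edge $\{B,B'\}$ of $\Sigma_t$ spans a $2$-simplex with a suitable $(n-2)$-subset of $A$. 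But that $2$-simplex is not contained in the previously built complex — its vertices $B,B'$ belong to no earlier piece — so the edge $\{B,B'\}$ is not ``already present''; and, more fundamentally, the union $\Lambda_0\cup\bigcup_t\Sigma_t$ is a \emph{proper} subcomplex of the link: it omits, for instance, the edge between a swap $(A\setminus\{i_j\})\cup\{t\}$ and the $(n-1)$-subset $A\setminus\{i_j\}$, as well as the edge between two swaps with the same deleted index but different added values. Lemma~\ref{cup_simp} is therefore being applied to a decomposition that does not cover the complex being computed.

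The repair is the star-cluster machinery that your proposal invokes only for the core $\Lambda_0$. At each stage one must decompose $K_{\leq t}=\text{SC}_{K_{\leq t}}(K_{<t})\cup\Sigma_t$ via Lemma~\ref{SC_complex}: it is the star cluster, not $K_{<t}$ itself, that contains all the mixed simplices and the full $1$-skeleton of $\Sigma_t$ (your ``filled through an $(n-2)$-subset'' argument shows precisely that each edge of $\Sigma_t$ lies in the star of a vertex of $K_{<t}$, which is the right statement). This creates a second proof obligation that your outline omits entirely: verifying the hypothesis of Lemma~\ref{SC_homo} — that any two vertices of $K_{<t}$ whose stars in $K_{\leq t}$ share a simplex outside $K_{<t}$ are themselves adjacent — so that $\text{SC}_{K_{\leq t}}(K_{<t})\simeq K_{<t}$ and the induction can close. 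That verification (Claim i in the paper's proof) is a genuine case analysis on which vertices of the previous stage see a given new swap, not mere bookkeeping. The same point recurs for the values $t$ in the last two gaps ($\ell=n-1$ and $\ell=n$), where $\Sigma_t$ is a $1$- or $0$-simplex, ``intersection equals the $1$-skeleton'' degenerates, and the paper instead shows the new simplices are swallowed whole by the star cluster, again via Lemma~\ref{SC_homo}.
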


\begin{proof} Let $K=\mathcal{VR}(\mathcal{F}^m_{\preceq A},2)$. Note that for any $B$ with $|B|\leq n-3$, $d(A, B)\geq 3$. Next we divide the vertices in the link of the vertex $A$ in $K$, lk$_K(A)$,  into the following pairwise disjoint collections $\mathcal{G}_k$ for $k=0, 1, \ldots, i_{n-1}$. 
These collections are defined as the following:

\begin{itemize}

\item[i)] $\mathcal{G}_{0}=\{B\in \mathcal{P}:|B|<n \text{ and }  d(B, A)=2\}$;

\item[ii)] for $k\in \{1, 2, \ldots, i_{n-1}\}\setminus \{i_1, i_2, \dots,i_{n-1}\}$ , $\mathcal{G}_k$ contains all the $B's$ with $|B|=n$ such that  $B$ contains $k$, all $i_j$'s with $i_j<k$, all but one of $i_j$'s with $i_j>k$;  
\item[iii)] $\mathcal{G}_{i_{n-1}}$ contains all the $B$'s with $|B|=n$ such that $\{i_1, i_2, \ldots, i_{n-1}\}\subset B$ and $B$ contains any other number between $i_{n-1}$ and $i_n$.

\item[iv)] $\mathcal{G}_{i_{j}}=\emptyset$ for $j=1, 2, \dots, n-2$ for the purpose of convenience.
\end{itemize}
By Lemma~\ref{subset_dist2}, $\mathcal{G}_0$ contains all the $B$'s such that $B\subset A$ and $|B| = n-1$ or $n-2$. Also, 
it is clear that $\bigcup_{k=1}^{i_{n-1}}\mathcal{G}_k$ contains all the $B$'s such that $B\prec A$, $d(A, B)=2$, and $|B|=n$. Hence $\text{lk}_{K}(A) =\mathcal{VR}(\bigcup_{k=0}^{i_{n-1}}{\mathcal{G}_k}, 2)$. For each $k=0, 1, \ldots, i_{n-1}$, we define $K_k=\mathcal{VR}(\mathcal{G}_k, 2)$  if $\mathcal{G}_k\neq \emptyset$ and $K_{\leq k}=\mathcal{VR}(\bigcup_{i=0}^k\mathcal{G}_i, 2)$. 
Hence $\text{lk}_K(A)=K_{\leq i_{n-1}}$.

Since $\mathcal{G}_{0}$ is the collection of all $(n-2)$-subsets and $(n-1)$-subsets of the $n$-set $A$, the complex $K_{0}$ is homeomorphic to $\mathcal{VR}(\mathcal{F}_{n-2}^n\cup \mathcal{F}^n_{n-1}, 2)$; hence by Theorem~\ref{homot_wide_2_2}, the complex $K_{0}=K_{\leq 0}$ is homotopy equivalent to a wedge sum of $(\sum_{k=2}^{n-2}{k\choose 2}+{n-1\choose 2})$-many copies of $S^2$'s.
Since  $\mathcal{G}_{i_j}=\emptyset$ for $j=1, 2, \ldots, n-2$, the complex $K_{\leq i_j}$ is same as $K_{\leq i_j-1}$ for such $j$.

Now we investigate the complex $K_k$ with $k\geq 1$ and the collection $\mathcal{G}_k\neq \emptyset$. Fix $k$ such that $1\leq k<i_{n-1}$ and $\mathcal{G}_k\neq \emptyset$. Then there exists an $\ell$ in the set  $\{1, 2, \ldots, n-1\}$ such that $i_{\ell-1}<k<i_\ell$. Then, the complex $K_k$ is the complex generated by a proper face of $L[i_1, \ldots, i_{\ell-1}, k, i_\ell, \ldots, i_n]$ which consists of all $B$ which contains $\{i_1, \ldots, i_{\ell-1}, k\}$ and all but one of $\{i_\ell, \ldots, i_n\}$; hence it is an $(n-\ell)$-simplex. And $K_{i_{n-1}}$ is a proper face of $N[i_1, i_2, \ldots, i_{n-1}]$ which includes all $B$'s which contains $\{i_1, i_2, \ldots, i_{n-1}\}$ and another number between $i_{n-1}$ and $i_n$; hence it is a complex generated by a $(d_n-1)$-simplex.

Next we determine the homotopy type of $K_{\leq i_{n-2}}$. If there is no $k$ such that $k\in [i_{n-2}]\setminus \{i_1, i_2, \ldots, i_{n-2}\}$, then  $d_1=1$ and  $d_2, \ldots, d_{n-2}$ are all zeroes and the complex $K_{\leq i_{n-2}}=K_0$ which is clearly homotopy equivalent $\bigvee_{\sum_{k=2}^{n-2}{k\choose 2}+\sum_{\ell=1}^{n-2}d_{\ell}\cdot {n-\ell \choose 2}} S^2$.  Now we suppose otherwise and fix $k$ such that $1\leq k\leq  i_{n-2}$ and $i_{\ell-1}<k<i_{\ell}$ for some $\ell=1, 2, \ldots, n-2$, here we define $i_0=0$.   Suppose, for induction, that $K_{\leq (k-1)}$ is homotopy equivalent to a wedge sum of 
 $S^2$'s. This holds when $k$ is the minimal natural number different from $i_1, i_2, \ldots i_{n-2}$ in which case $K_{\leq k-1}$ is homotopy equivalent to $K_0$. By Lemma~\ref{SC_complex}, $K_{\leq k}=\text{SC}_{K_{\leq k}}(K_{\leq (k-1)}) \cup K_{k}$. We'll prove the following two claims and these two claims  imply that $K_{\leq k}\simeq K_{\leq {(k-1)}}\vee (\bigvee_{{n-\ell \choose 2}} S^2)$  by Lemma~\ref{cup_simp} and the inductive assumption.

\begin{description}
\item[Claim i)] $\text{SC}_{K_{\leq k}}(K_{\leq (k-1)})\simeq K_{\leq (k-1)}$.

\item[Claim ii)] $\text{SC}_{K_{\leq k}}(K_{\leq (k-1)}) \cap K_{k}\simeq \bigvee_{{n-\ell\choose 2}}S^1$.
\end{description}

\textbf{Proof of Claim i):} We'll verify that the condition in Lemma~\ref{SC_homo} is satisfied. Then the result follows. We'll show that $d(C_1, C_2)\leq 2$ for $C_1, C_2 \in K_{\leq (k-1)}$ whenever $st_{K_{\leq k}}(C_1)\cap st_{K_{\leq k}}(C_2)\setminus K_{\leq (k-1)}\neq \emptyset$.  Pick a vertex $D$ in $K_k$. Then $D$ contains $k$ and an $(n-1)$-subset of $A$, denoted by $C$. Then for any vertex $B\in K_{\leq (k-1)}$, $D\in \text{st}_{K_{\leq k}}(B)$ if and only if $B$ is one of the following: a) $C\subset B$ and $B$ contains one of $1, 2, \ldots, k-1$ not in $A$; b) $C$; c) any $(n-2)$ subset of $C$. Any pair of such vertices  have distance $2$; hence they form a $1$-simplex in $K_{\leq (k-1)}$. This finishes the proof of Claim i).

\textbf{Proof of Claim ii):} Since the complex $K_k$ is generated by an $(n-\ell)$-simplex, $K_k^{(1)}$ is homotopy equivalent to $\bigvee_{{n-\ell\choose 2}}S^1$ by Lemma~\ref{subcom_simplex}. We'll show that  $\text{SC}_{K_{\leq k}}(K_{\leq (k-1)}) \cap K_{k}=K_k^{(1)}$. Pick any pair of vertices, $B_1, B_2$, in $K_k$. Then,  $B_1\cap B_2$ contains the number $k$ and an $(n-2)$-subset of $A$, denoted by $D$. Note that $D$ is a vertex in the complex $K_{0}\subseteq K_{\leq k-1}$; therefore, the $1$-simplex $\{B_1, B_2\}\in \text{st}_{K_{\leq k}}(D)$. Hence $K_k^{(1)}\subseteq \text{SC}_{K_{\leq k}}(K_{\leq (k-1)}) \cap K_{k}$. It is straightforward to verity that for any $B\in\mathcal{G}_i$ with $i=1, 2, \ldots, k-1$,  $\text{st}_{K_{\leq k}}(B)\cap K_{k}$ is a complex containing only one vertex because any vertex in this complex must contains $B\cap A$ and the number $k$. Similarly, for any $B\in\mathcal{G}_{0}$ with $|B|=n-1$, there is at most $1$ vertex in $K_k$ containing $B$ as a subset, i.e. having a distance $\leq 2$ from $B$;  and if $B\in\mathcal{G}_{0}$ with $|B|=n-2$, then there are at most two vertices in $K_k$ which have distance $2$ from $B$. Hence, $\text{st}_{K\leq k}(B)\cap K_k\subseteq K_k^{(1)}$ for any vertex $B$ in the complex $K_{\leq (k-1)}$.    This finishes the proof of Claim ii).

\medskip

By an inductive calculation, we have proved that the complex $K_{\leq i_{n-2}}$  is homotopy equivalent to a wedge sum of  $(\sum_{k=2}^{n-2}{k\choose 2}+\sum_{\ell=1}^{n-2}d_{\ell}\cdot {n-\ell \choose 2})$-many $S^2$'s. Next, we show that the complex $K_{\leq (i_{n-1}-1)}$ is homotopy equivalent to $K_{\leq i_{n-2}}$.
If $d_{n-1}=0$, then $K_{<i_{n-1}}= K_{\leq i_{n-2}}$; otherwise we fix $k$ with $i_{n-2}<k<i_{n-1}$ and suppose that $K_{\leq (k-1)}\simeq K_{\leq i_{n-2}}$. The collection  $\mathcal{G}_k$ contains two vertices $i_1i_2\cdots i_{n-2}ki_n$ and $i_1i_2\cdots i_{n-2}ki_{n-1}$; and  the simplex $\{i_1i_2\cdots i_{n-2}ki_n, i_1i_2\cdots i_{n-2}ki_{n-1}\}$ is in $\text{st}_{ K_{\leq (i_{n-1}-1)}}(D)$ where $D=i_1i_2\cdots i_{n-2}\in K_{\leq (k-1)}$. Hence SC$_{K_{\leq k}}(K_{\leq (k-1)})=K_{\leq k}$. By a similar discussion as in the proof of claim i), we can verify that the complex $K_{\leq k}$ and its subcomplex $K_{\leq (k-1)}$ satisfy the condition in Lemma~\ref{SC_homo}. Therefore, SC$_{K_{\leq k}}(K_{\leq (k-1)})\simeq K_{\leq (k-1)}$. Hence the complex $K_{\leq k}$ is homotopy equivalent to $K_{\leq i_{n-2}}$. Therefore by induction, the complex $K_{\leq (i_{n-1}-1)}$ is homotopy equivalent to $K_{\leq i_{n-2}}$.

In the last part, we show that the complex $K_{\leq i_{n-1}}=\text{lk}_K(A)$ is also homotopy equivalent to $K_{\leq i_{n-2}}$. Again it is straightforward to verify that $K_{\leq i_{n-1}}$ and $K_{\leq (i_{n-1}-1)}$ satisfy the condition in Lemma~\ref{SC_homo}. Hence, SC$_{K_{\leq i_{n-1}}}(K_{\leq (i_{n-1}-1)})\simeq K_{\leq (i_{n-1}-1)}$. Recall that  $K_{i_{n-1}}$ is a complex generated by a proper face of the simplex $N[i_1i_2\cdots i_{n-1}]$. Note that $i_1i_2\cdots i_{n-1}$ is a vertex in $K_{\leq (i_{n-1}-1)}$; and also, $\{i_1i_2\cdots i_{n-1}\}\cup \mathcal{G}_{i_{n-1}}$ is a simplex in $K_{\leq i_{n-1}}$. So, $K_{i_{n-1}}\subset \text{st}_{K_{\leq i_{n-1}}}(i_1i_2\cdots i_{n-1})$; and hence SC$_{K_{\leq i_{n-1}}}(K_{\leq (i_{n-1}-1)})=K_{\leq i_{n-1}}$. And this finishes the proof.  \end{proof}

Motivated by the lemma above, we define a natural number $r_A$ for each $A\subset [m]$ in the following way.  For each $A=i_1 i_2 \cdots i_n\subseteq [m]$ with $d_1=i_1$ and $d_\ell=i_\ell-(i_{\ell-1}+1)$ for $\ell=2, 3, \ldots, n$, we define $$r_A= \sum_{k=2}^{n-2}{k\choose 2}+\sum_{\ell=1}^{n-2}d_{\ell}\cdot {n-\ell \choose 2}.$$

\begin{theorem}\label{homo_leqn} Suppose that $m\geq n>2$ and $A=i_1i_2\cdots i_n\in \mathcal{P}([m])$. Then the complex $\mathcal{VR}(\mathcal{F}^m_{\preceq A},2)$  is homotopy equivalent to a wedge sum of $S^3$'s.

More specifically,  if $A$ is the vertex $\{1, 2, 3\}\subset[m]$, $$\mathcal{VR}(\mathcal{F}^m_{\preceq A},2)\simeq S^3.$$

And for any other vertex $A$ with $\{1, 2, 3\}\prec A$,
$$\mathcal{VR}(\mathcal{F}^m_{\preceq A},2)\simeq (\bigvee_{r_A} S^3)\vee \mathcal{VR}(\mathcal{F}^m_{\prec A},2). $$
Therefore if $A\in \mathcal{P}([m])$ with $|A|\geq 3$, $\mathcal{VR}(\mathcal{F}^m_{\preceq A},2)$ is homotopy equivalent to the wedge sum of $\sum\{r_B: \{1, 2, 3\}\preceq B\preceq A\}$-many copies of $S^3$.
\end{theorem}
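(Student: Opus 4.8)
The plan is to induct on the vertex $A$ along the total order $\prec$, peeling off the $\prec$-maximal vertex at each stage by means of Lemma~\ref{complex_add_1v}. Writing $K = \mathcal{VR}(\mathcal{F}^m_{\preceq A}, 2)$, I first observe that $A$ is the $\prec$-largest element of $\mathcal{F}^m_{\preceq A}$ and that $K$ is a clique complex, so deleting $A$ leaves precisely the full subcomplex on $\mathcal{F}^m_{\prec A}$; that is, $K \setminus A = \mathcal{VR}(\mathcal{F}^m_{\prec A}, 2)$. Since every finite total order has immediate predecessors, $\mathcal{F}^m_{\prec A} = \mathcal{F}^m_{\preceq A^-}$ for the $\prec$-predecessor $A^-$ of $A$, and a short check shows $|A^-| \geq 3$ whenever $|A| \geq 3$ and $\{1,2,3\} \prec A$, so $K \setminus A$ is covered by the inductive hypothesis. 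The driving identity is the one supplied by Lemma~\ref{complex_add_1v}, namely $K \simeq (K \setminus A) \vee \Sigma(\text{lk}_K(A))$, valid as soon as the inclusion $\text{lk}_K(A) \hookrightarrow K$ is null-homotopic. Feeding in Lemma~\ref{lk_precA_whole}, which gives $\text{lk}_K(A) \simeq \bigvee_{r_A} S^2$ and hence $\Sigma(\text{lk}_K(A)) \simeq \bigvee_{r_A} S^3$, this identity already produces the recursion asserted in the theorem.

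The step I expect to be the main obstacle is verifying the null-homotopy hypothesis of Lemma~\ref{complex_add_1v}, and I would dispatch it uniformly. The inclusion factors as $\text{lk}_K(A) \hookrightarrow K \setminus A \hookrightarrow K$, so it is enough to show the inclusion into $K \setminus A$ is null-homotopic. By the inductive hypothesis (and, in the base case, by direct inspection) the complex $K \setminus A = \mathcal{VR}(\mathcal{F}^m_{\prec A}, 2)$ is either contractible or a wedge of $3$-spheres, hence $2$-connected; in particular $\pi_2(K \setminus A) = 0$. Because $\text{lk}_K(A)$ is homotopy equivalent to the $2$-dimensional complex $\bigvee_{r_A} S^2$, every map from it into a $2$-connected space is null-homotopic: composing the inclusion with the homotopy equivalence $\text{lk}_K(A) \simeq \bigvee_{r_A} S^2$ reduces this to the vanishing of $\pi_2(K \setminus A)$. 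It is exactly here that the one-dimensional gap between the $S^2$'s making up the link (Lemma~\ref{lk_precA_whole}) and the $S^3$'s making up the ambient deleted complex does the work.

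For the base case $A = \{1,2,3\}$ I would note that $\mathcal{F}^m_{\prec \{1,2,3\}} = \mathcal{F}^m_{\leq 2}$, so $K \setminus A = \mathcal{VR}(\mathcal{F}^m_{\leq 2}, 2)$ is the cone with apex $\emptyset$ and thus contractible; substituting into the formula for $r_A$ gives $r_{\{1,2,3\}} = 1$, and Lemma~\ref{complex_add_1v} then yields $K \simeq \text{pt} \vee S^3 = S^3$. With the null-homotopy secured in general, the inductive step gives $\mathcal{VR}(\mathcal{F}^m_{\preceq A}, 2) \simeq \mathcal{VR}(\mathcal{F}^m_{\preceq A^-}, 2) \vee \bigvee_{r_A} S^3$, the displayed recursion. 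Finally I would unwind this recursion down the $\prec$-chain from $A$ to $\{1,2,3\}$: each $B$ with $\{1,2,3\} \preceq B \preceq A$ contributes $r_B$ copies of $S^3$, while all sets $\prec \{1,2,3\}$ contribute only the contractible base, giving $\mathcal{VR}(\mathcal{F}^m_{\preceq A}, 2) \simeq \bigvee_{\sum\{r_B : \{1,2,3\} \preceq B \preceq A\}} S^3$, as claimed.
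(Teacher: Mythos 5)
Your proposal is correct and follows essentially the same route as the paper: both arguments peel off the $\prec$-maximal vertex $A$, apply Lemma~\ref{lk_precA_whole} to identify $\text{lk}_K(A)\simeq\bigvee_{r_A}S^2$, observe that this inclusion is null-homotopic because the deleted complex is (by induction) contractible or a wedge of $S^3$'s, and conclude via Lemma~\ref{complex_add_1v}. Your explicit factoring of the inclusion through $K\setminus A$ and the appeal to $\pi_2(K\setminus A)=0$ merely spell out the null-homotopy step that the paper states more tersely.
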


\begin{proof} Let $K=\mathcal{VR}(\mathcal{F}^m_{\preceq A},2)$ and $L=\mathcal{VR}(\mathcal{F}^m_{\prec A},2)$. Suppose $A=\{1, 2, 3\}$. Then $r_A=1$, hence $\text{lk}_K(A)$ is homotopic to $S^2$ by Lemma~\ref{lk_precA_whole}. Because the complex $L$ is contractible,  the complex $K$ is homotopy equivalent to $S^3$ by  Lemma~\ref{complex_add_1v}.

Fix $A$ with $\{1, 2, 3\}\prec A$ and suppose for induction that $L$ is homotopy equivalent to a wedge sum of $S^3$'s. Again by Lemma~\ref{lk_precA_whole}, $\text{lk}_K(A)$ is homotopic to a wedge sum of $r_A$-many $S^2$'s. Hence the inclusion map from $\text{lk}_K(A)$ to $L$ is null-homotopic. Therefore, the general result holds due to again Lemma~\ref{complex_add_1v}. \end{proof}

The following result is a direct application of Lemma~\ref{cup_simp}, Lemma~\ref{lk_precA_whole}, and Theorem~\ref{homo_leqn}.
\begin{theorem}\label{allpowersets}Suppose that $m\geq n>2$. For each $n$, we define $$t_n=\sum_{A\subseteq[m]\text{ with } |A|=n}r_A.$$
Then,
$$\mathcal{VR}(\mathcal{F}^m_{\leq n},2)\simeq \bigvee_{t_n} S^3\vee \mathcal{VR}(\mathcal{F}^m_{\leq n-1},2). $$

Therefore, $\mathcal{VR}(\mathcal{F}^m_{\leq n},2)$ is homotopy equivalent to the wedge sum of ($\sum_{k=3}^n t_k$)-many copies of $S^3$.

\end{theorem}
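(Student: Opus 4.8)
Theorem~\ref{allpowersets} is an assembly result: it collects the single-set increments from Theorem~\ref{homo_leqn} into a cardinality-layer increment and then telescopes over $n$. Let me plan how to prove it.

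The plan is to fix $n$ with $m \ge n > 2$ and understand how passing from $\mathcal{VR}(\mathcal{F}^m_{\le n-1}, 2)$ to $\mathcal{VR}(\mathcal{F}^m_{\le n}, 2)$ changes the homotopy type by adding all the $n$-subsets of $[m]$ one at a time, in the total order $\prec$. The key observation is that within the collection of all subsets of cardinality $\le n$, the order $\prec$ lists every subset of size $< n$ before any subset of size $n$; so $\mathcal{F}^m_{\le n-1} = \mathcal{F}^m_{\prec A_0}$ where $A_0$ is the $\prec$-smallest $n$-subset (namely $\{1,2,\dots,n\}$), and each successive $n$-subset $A$ has $\mathcal{F}^m_{\prec A} \subseteq \mathcal{F}^m_{\le n}$ with $\mathcal{F}^m_{\preceq A} = \mathcal{F}^m_{\prec A} \cup \{A\}$. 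The whole point is that the inductive step in the proof of Theorem~\ref{homo_leqn} is carried out by adjoining one vertex $A$ via Lemma~\ref{complex_add_1v}, and the link $\text{lk}(A)$ computed in Lemma~\ref{lk_precA_whole} depends only on $A$ itself, not on whether we are inside $\mathcal{VR}(\mathcal{F}^m_{\preceq A}, 2)$ or inside the larger complex $\mathcal{VR}(\mathcal{F}^m_{\le n}, 2)$.

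First I would make that last point precise. For an $n$-subset $A$, the vertices of $\text{lk}_K(A)$ in $K = \mathcal{VR}(\mathcal{F}^m_{\le n}, 2)$ are exactly the $B$ with $d(A,B) \le 2$ and $B \ne A$; by Lemma~\ref{subset_dist2} every such $B$ has $|B| \le n$, and the only ones with $|B| = n$ and $B \npreceq A$ would have to satisfy $d(A,B)=2$, i.e. $|A \cap B| = n-1$, but such $B$ still lies in $\mathcal{F}^m_{\le n}$. So the link of $A$ in $\mathcal{VR}(\mathcal{F}^m_{\le n}, 2)$ is \emph{strictly larger} than its link in $\mathcal{VR}(\mathcal{F}^m_{\preceq A}, 2)$, and I cannot simply reuse Lemma~\ref{lk_precA_whole} verbatim. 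The honest route is to build $\mathcal{VR}(\mathcal{F}^m_{\le n}, 2)$ from $\mathcal{VR}(\mathcal{F}^m_{\le n-1}, 2)$ by adding the $n$-subsets one at a time in $\prec$-order; when adjoining $A$, its link within the complex-so-far consists only of vertices $B \prec A$ with $d(A,B)\le 2$, which is precisely the link computed in Lemma~\ref{lk_precA_whole}, homotopy equivalent to $\bigvee_{r_A} S^2$. Since $S^2 \to (\text{wedge of }S^3\text{'s})$ is null-homotopic, Lemma~\ref{complex_add_1v} gives that adjoining $A$ wedges on $\Sigma(\bigvee_{r_A} S^2) \simeq \bigvee_{r_A} S^3$. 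Summing these increments over all $\binom{m}{n}$ $n$-subsets $A$ yields $\sum_A r_A = t_n$ new copies of $S^3$, establishing the stated recursion $\mathcal{VR}(\mathcal{F}^m_{\le n}, 2) \simeq \bigvee_{t_n} S^3 \vee \mathcal{VR}(\mathcal{F}^m_{\le n-1}, 2)$.

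The main obstacle is verifying that at each stage the relevant link really is the $\prec$-restricted link of Lemma~\ref{lk_precA_whole} and not something bigger. When I adjoin $A$ to the complex generated by $\mathcal{F}^m_{\le n-1}$ together with all $n$-subsets $\prec A$, I must confirm that the link of $A$ inside \emph{this} complex — not inside $\mathcal{VR}(\mathcal{F}^m_{\preceq A}, 2)$ — is the same: the vertices are the same ($B \preceq$-below $A$ with $d(A,B) \le 2$), but I must check that a simplex $\sigma \cup \{A\}$ lies in the partial complex iff it lies in $\mathcal{VR}(\mathcal{F}^m_{\preceq A}, 2)$, which holds because the Vietoris--Rips condition is intrinsic to the vertex set spanned and all those vertices already belong to $\mathcal{F}^m_{\preceq A}$. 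With this identification, the inductive computation of the link is literally Lemma~\ref{lk_precA_whole}, and the remainder of the argument — null-homotopy of the inclusion and one application of Lemma~\ref{complex_add_1v} per added vertex — is routine. The telescoping of the recursion over $k = 3, \dots, n$ (the base case being that $\mathcal{VR}(\mathcal{F}^m_{\le 2}, 2)$ is contractible, a cone on $\emptyset$) gives the closed count $\sum_{k=3}^n t_k$.
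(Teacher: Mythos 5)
Your proposal is correct and matches the paper's intended argument: the paper derives this theorem as a direct consequence of Theorem~\ref{homo_leqn} together with Lemmas~\ref{cup_simp} and~\ref{lk_precA_whole}, which is exactly your telescoping of the one-vertex-at-a-time recursion over all $n$-subsets in $\prec$-order (noting that the partial complex just before adjoining $A$ is literally $\mathcal{VR}(\mathcal{F}^m_{\prec A},2)$, so Lemma~\ref{lk_precA_whole} applies verbatim). Your caution about the link of $A$ in the full complex $\mathcal{VR}(\mathcal{F}^m_{\le n},2)$ being larger than the $\prec$-restricted link is a correct and worthwhile clarification, but it does not change the route.
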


Adamaszek and Adams in \cite{AA22} proved that $\mathcal{VR}(Q_m,2)= \mathcal{VR}(\mathcal{F}^m_{\leq m},2)\simeq \bigvee_{c_m} S^3$ for any $m>2$, where $c_m=\sum_{0\leq j<i<m}(j+1)(2^{m-2}-2^{i-1})$. By Theorem~\ref{allpowersets}, $c_m=\sum_{k=3}^mt_k$ where $t_n$ is defined as in the statement of Theorem~\ref{allpowersets}.

\section{Vietoris-Rips Complex $\mathcal{VR}(\mathcal{F}_{n}^m\cup \mathcal{F}_{n'}^m, 2)$}\label{F_p_q_m}

In this section, we'll investigate the homotopy types of $\mathcal{VR}(\mathcal{F}_{n}^m\cup \mathcal{F}_{n'}^m, 2)$ with $n, n'\in \mathbb{N}$. Clearly when $|n-n'|\geq 3$, then $\mathcal{VR}(\mathcal{F}_{n}^m\cup \mathcal{F}_{n'}^m, 2)$ is a disjoint union of
$\mathcal{VR}(\mathcal{F}_{n}^m, 2)$ and $\mathcal{VR}(\mathcal{F}_{n'}^m, 2)$; then by the discussion in Section~\ref{F_n_m}, its homotopy type is clear.  The homotopy types of the complex $\mathcal{VR}(\mathcal{F}_{n}^m\cup \mathcal{F}_{n+1}^m, 2)$ are discussed in Section~\ref{F_A_m} (see Theorem~\ref{homot_wide_2_2}).

In the following, we'll find the homotopy types of the Vietoris-Rips complexes  $\mathcal{VR}(\mathcal{F}_n^{m}\cup \mathcal{F}_{n+2}^m, 2)$ for $n+2\leq m$. Clearly for $m\geq 3$, $\mathcal{VR}(\mathcal{F}_0^{m}\cup \mathcal{F}_{2}^m, 2)$ and $\mathcal{VR}(\mathcal{F}_{m}^{m}\cup \mathcal{F}_{m-2}^m, 2)$ are contractible because both of them are cones. Next, we'll discuss the complexes $\mathcal{VR}(\mathcal{F}_n^{m}\cup \mathcal{F}_{n+2}^m, 2)$ in general.

The next result can be obtained by applying the proof of Lemma~\ref{lk_precA_whole} with small modifications; next we'll go through the difference of the proofs. For each $A=i_1i_2\cdots i_{n}\in \mathcal{F}_{n}^m$ with $c_1=i_1-1$ and $c_\ell =i_{\ell}-(i_{\ell-1}+1)$ for $\ell=2, 3, \ldots, n$, we define $$s_A= \sum_{k=2}^{n-2}{k\choose 2}+\sum_{\ell=1}^{n-2} c_\ell {n-\ell\choose 2}.$$

Note that for any $A\subset [m]$ with $|A|=n$, $r_A=s_A+{n-1\choose 2}$.

\begin{lemma}\label{lk_wide_2} Suppose that $4 \leq  n< m-1$ and $A=i_1i_2\cdots i_{n}\subset [m]$ with $i_1\geq 2$. Let $K = \mathcal{VR}(\mathcal{F}^m_{n-2} \cup \mathcal{F}^m_{n}, 2)\cap \mathcal{VR}(\mathcal{F}^m_{\preceq A}, 2)$.

Then,

$$\text{lk}_K(A)\simeq \bigvee_{s_A}S^2$$

\end{lemma}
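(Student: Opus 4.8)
The plan is to reuse the argument of Lemma~\ref{lk_precA_whole} essentially verbatim, isolating the two places where restricting the vertex set to cardinalities $n-2$ and $n$ changes the bookkeeping. Write $K=\mathcal{VR}(\mathcal{F}^m_{n-2}\cup\mathcal{F}^m_n,2)\cap\mathcal{VR}(\mathcal{F}^m_{\preceq A},2)$, so that the vertices of $K$ are exactly the sets $B\preceq A$ with $|B|\in\{n-2,n\}$. First I would record, exactly as in Lemma~\ref{lk_precA_whole} and via Lemma~\ref{subset_dist2}, the partition of the vertices of $\text{lk}_K(A)$ into $\mathcal{G}_0,\mathcal{G}_1,\ldots,\mathcal{G}_{i_{n-1}}$. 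The one difference in this list is that $\mathcal{G}_0$ now consists only of the $(n-2)$-subsets of $A$; the $(n-1)$-subsets, which formed the other half of $\mathcal{G}_0$ in Lemma~\ref{lk_precA_whole}, are no longer vertices of $K$. The size-$n$ collections $\mathcal{G}_k$ with $k\ge 1$ are defined by the same rule and are untouched by the restriction.

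The first consequence is at the base of the induction: $K_0=\mathcal{VR}(\mathcal{G}_0,2)$ is now homeomorphic to $\mathcal{VR}(\mathcal{F}^n_{n-2},2)$ (the family of $(n-2)$-subsets of the $n$-element set $A$ is isometric to $\mathcal{F}^n_{n-2}$), which by Corollary~\ref{homot_n_2}---legitimately applied because $n\ge 4$ gives $1<n-2<n-1$---is a wedge of $\sum_{k=2}^{n-2}\binom{k}{2}$ copies of $S^2$. In Lemma~\ref{lk_precA_whole} this base complex also contained the $(n-1)$-subsets of $A$ and hence carried an extra $\binom{n-1}{2}$ spheres. Deleting them is precisely what explains the discrepancy between the two formulas: the $k$'s in the first gap $\{1,\ldots,i_1-1\}$ still contribute $\binom{n-1}{2}$ spheres apiece through their $\mathcal{G}_k$, so the total first-gap contribution is now $(i_1-1)\binom{n-1}{2}=c_1\binom{n-1}{2}$ instead of the $i_1\binom{n-1}{2}=d_1\binom{n-1}{2}$ of the earlier lemma, where the missing $\binom{n-1}{2}$ had come from $\mathcal{G}_0$.

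Next I would run the same star-cluster induction on $k$, forming $K_{\le k}=\mathcal{VR}(\bigcup_{i=0}^k\mathcal{G}_i,2)$, splitting $K_{\le k}=\text{SC}_{K_{\le k}}(K_{\le k-1})\cup K_k$ by Lemma~\ref{SC_complex}, and applying Lemma~\ref{cup_simp} with Lemma~\ref{subcom_simplex}. For $k$ in a gap $(i_{\ell-1},i_\ell)$ with $1\le\ell\le n-2$, the complex $K_k$ is generated by an $(n-\ell)$-simplex and contributes $\binom{n-\ell}{2}$ copies of $S^2$, and the verifications of the two claims (that $\text{SC}_{K_{\le k}}(K_{\le k-1})\simeq K_{\le k-1}$ and that $\text{SC}_{K_{\le k}}(K_{\le k-1})\cap K_k=K_k^{(1)}$) go through word for word, except that every occurrence of an $(n-1)$-subset of $A$ as a vertex of $\mathcal{G}_0$ is simply removed; this only shortens the case analysis (in the second claim the sole surviving case is the $(n-2)$-subset with at most two neighbours in $K_k$). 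Summing yields $K_{\le i_{n-2}}\simeq\bigvee_{\sum_{k=2}^{n-2}\binom{k}{2}+\sum_{\ell=1}^{n-2}c_\ell\binom{n-\ell}{2}}S^2=\bigvee_{s_A}S^2$.

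The only genuinely new step, and the one I expect to be the main obstacle, is the final absorption showing that the leftover pieces---the gap $(i_{n-2},i_{n-1})$ and the collection $\mathcal{G}_{i_{n-1}}$---add nothing. Lemma~\ref{lk_precA_whole} did this by noting $K_{i_{n-1}}\subseteq\text{st}(i_1i_2\cdots i_{n-1})$, coning off with the $(n-1)$-subset $i_1i_2\cdots i_{n-1}$, which is no longer a vertex of $K$. The fix is to cone off with the $(n-2)$-subset $i_1i_2\cdots i_{n-2}\in\mathcal{G}_0$ instead: each vertex of $\mathcal{G}_{i_{n-1}}$ has the form $\{i_1,\ldots,i_{n-1},k\}$ with $i_{n-1}<k<i_n$, hence lies at distance $2$ from $i_1i_2\cdots i_{n-2}$, and since these vertices are also pairwise at distance $2$, the set $\{i_1i_2\cdots i_{n-2}\}\cup\mathcal{G}_{i_{n-1}}$ spans a simplex of the clique complex $K$. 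Thus $K_{i_{n-1}}\subseteq\text{st}_K(i_1i_2\cdots i_{n-2})$, and the same $(n-2)$-subset disposes of the two-vertex complexes $K_k$ for $i_{n-2}<k<i_{n-1}$ just as before. Checking the hypothesis of Lemma~\ref{SC_homo} as in Lemma~\ref{lk_precA_whole} then gives $\text{lk}_K(A)=K_{\le i_{n-1}}\simeq K_{\le i_{n-2}}\simeq\bigvee_{s_A}S^2$, as claimed.
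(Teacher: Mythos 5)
Your proposal takes the same route as the paper's own (very terse) proof, which simply redefines $\mathcal{G}_0$ to consist only of the $(n-2)$-subsets of $A$, identifies $K_0$ with $\mathcal{VR}(\mathcal{F}^{n}_{n-2},2)\simeq\bigvee_{\sum_{k=2}^{n-2}\binom{k}{2}}S^2$ via Corollary~\ref{homot_n_2}, and declares the remainder identical to the proof of Lemma~\ref{lk_precA_whole}. You go further in one genuinely useful respect: you notice that the final absorption step of Lemma~\ref{lk_precA_whole} cones off $K_{i_{n-1}}$ from the vertex $i_1i_2\cdots i_{n-1}$, which is no longer a vertex here, and your substitute cone vertex $i_1i_2\cdots i_{n-2}\in\mathcal{G}_0$ (at distance $2$ from every element of $\mathcal{G}_{i_{n-1}}$) correctly repairs the one step that the paper's ``the rest is the same'' leaves implicit.
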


\begin{proof}  As in the proof of Lemma~\ref{lk_precA_whole}, we divide the vertices in lk$_K(A)$ into pairwise disjoint collections $\mathcal{G}_k$ for $k =0, 1, \ldots, i_{n-1}$. For $k\geq 1$, $\mathcal{G}_k$ is exactly defined in the same way as in the proof of Lemma~\ref{lk_precA_whole}. Note that the vertices in $K$ have either size $n-2$ or $n$. Then $\mathcal{G}_0$ contains all the subsets of $A$ with size $n-2$. The complexes $K_k$ and $K_{\leq k}$ are defined in the same ways as in the proof of Lemma~\ref{lk_precA_whole} for $k=0, 1, \ldots, i_{n-1}$. Hence $K_0$ is homeomorphic to $\mathcal{VR}(\mathcal{F}_{n-2}^{n}, 2)$; hence by Corollary~\ref{homot_n_2}, it is homotopy equivalent to $\sum_{k=2}^{n-2}{k\choose 2}$-many copies of $S^2$'s. Then the rest of the proof is same as the proof of Lemma~\ref{lk_precA_whole}.      \end{proof}

\begin{theorem} \label{homo_2} Suppose that $1\leq n<m-3$. Then the complex $\mathcal{VR}(\mathcal{F}_n^{m}\cup \mathcal{F}_{n+2}^m, 2)$ is homotopy equivalent to a wedge sum of $S^3$'s.

More specifically,

$$\mathcal{VR}(\mathcal{F}_1^{m}\cup \mathcal{F}_{3}^m, 2)\simeq \bigvee_{m\choose 4} S^3;$$

and for $n\geq 2$,  
we define $o_{m, n} = \sum \{s_A: A\in\mathcal{F}^m_{n+2} \text{ with }\min A\geq 2\} $ and then
$$\mathcal{VR}(\mathcal{F}_n^{m}\cup \mathcal{F}_{n+2}^m, 2)\simeq\mathcal{VR}(\mathcal{F}^{m-1}_{n-1}\cup \mathcal{F}^{m-1}_{n+1}, 2)\vee \bigvee_{o_{m, n}}S^3.$$

Therefore, $\mathcal{VR}(\mathcal{F}_n^{m}\cup \mathcal{F}_{n+2}^m, 2)$ is homotopy equivalent to $(\sum_{k=2}^n o_{m+k-n, k} +{m+1-n\choose 4})$-many copies of $S^3$.
\end{theorem}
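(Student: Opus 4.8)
The plan is to prove the two displayed formulas by induction on $n$, the recursion corresponding to peeling off the element $1$, and then to unwind it for the closed count. Write $K=\mathcal{VR}(\mathcal{F}_n^m\cup\mathcal{F}_{n+2}^m,2)$ and let $W=\{B\in\mathcal{F}_{n+2}^m:1\notin B\}=\{B_1\prec\cdots\prec B_N\}$ be the $(n+2)$-subsets not containing $1$, listed in the total order $\prec$. Since every set of size $n$ and every $(n+2)$-subset containing $1$ precedes $B_1$ in $\prec$, the full subcomplex $K'=\mathcal{VR}(\mathcal{F}_n^m\cup\{B\in\mathcal{F}_{n+2}^m:1\in B\},2)$ equals $K\cap\mathcal{VR}(\mathcal{F}^m_{\prec B_1},2)$, and more generally the subcomplex obtained after adjoining $B_1,\dots,B_{j-1}$ is exactly $K\cap\mathcal{VR}(\mathcal{F}^m_{\prec B_j},2)$. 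Thus $K$ is assembled from $K'$ by adjoining $B_1,\dots,B_N$ as successive $\prec$-maximal vertices.

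First I would show $K'$ is the base of the recursion, $K'\simeq\mathcal{VR}(\mathcal{F}_{n-1}^{m-1}\cup\mathcal{F}_{n+1}^{m-1},2)$. Let $L=\mathcal{VR}(\mathcal{S}_1,2)$, where $\mathcal{S}_1$ is the collection of sets in $\mathcal{F}_n^m\cup\mathcal{F}_{n+2}^m$ containing $1$; the isometry $B\mapsto B\setminus\{1\}$ identifies $L$ with $\mathcal{VR}(\mathcal{F}_{n-1}^{m-1}\cup\mathcal{F}_{n+1}^{m-1},2)$, which is a wedge of $S^3$'s by the inductive hypothesis. I would verify the hypothesis of Lemma~\ref{SC_homo} for $L$ inside $K'$: given $C,C'\in\mathcal{S}_1$ with $(\text{st}_{K'}(C)\cap\text{st}_{K'}(C'))\setminus L\neq\emptyset$, Lemma~\ref{subset_dist2} forces, after splitting into the cases $|C|,|C'|\in\{n,n+2\}$, that $C\setminus\{1\}$ and $C'\setminus\{1\}$ sit inside a common witness, whence $d(C,C')\le2$, exactly as in Lemma~\ref{homot_sets_contain_1}. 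I would also check $\text{SC}_{K'}(L)=K'$: a simplex of $K'$ with no vertex containing $1$ consists of $n$-subsets missing $1$, and by Lemma~\ref{max_simp} either shares $n-1$ common elements (coned by the $n$-subset $\{1\}\cup\bigcap\sigma$) or is of $L$-type with union of size $n+1$ (coned by the $(n+2)$-subset $\{1\}\cup\bigcup\sigma$, which lies in $K'$). Lemma~\ref{SC_homo} then gives $K'\simeq L$.

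Next I would adjoin $B_1,\dots,B_N$ one at a time, applying Lemma~\ref{complex_add_1v} at each step. When $B_j$ is adjoined it is the $\prec$-largest vertex of $K\cap\mathcal{VR}(\mathcal{F}^m_{\preceq B_j},2)$, so its link there is precisely the one computed in Lemma~\ref{lk_wide_2} (applied with that lemma's $n$ replaced by $n+2$), giving $\text{lk}(B_j)\simeq\bigvee_{s_{B_j}}S^2$. Carrying along the inductive claim that the current complex is a wedge of $S^3$'s, the inclusion of $\text{lk}(B_j)$ is null-homotopic because $\pi_2$ of a wedge of $S^3$'s vanishes; Lemma~\ref{complex_add_1v} then contributes one wedge summand $\bigvee_{s_{B_j}}S^3$. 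Summing $s_{B_j}$ over $W$ gives the term $\bigvee_{o_{m,n}}S^3$ and hence the recursion $\mathcal{VR}(\mathcal{F}_n^m\cup\mathcal{F}_{n+2}^m,2)\simeq\mathcal{VR}(\mathcal{F}_{n-1}^{m-1}\cup\mathcal{F}_{n+1}^{m-1},2)\vee\bigvee_{o_{m,n}}S^3$ for $n\ge2$.

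Finally, the case $n=1$ needs a separate treatment since Lemma~\ref{lk_wide_2} requires size at least $4$: here I would compute the link of each $3$-subset $A=i_1i_2i_3$ with $i_1\ge2$ directly to be $\bigvee_{i_1-1}S^2$, note that the base $K'\simeq\mathcal{VR}(\mathcal{F}_0^{m-1}\cup\mathcal{F}_2^{m-1},2)$ is a cone and hence contractible, and use the identity $\sum_{2\le i_1<i_2<i_3\le m}(i_1-1)={m\choose 4}$ to get $\mathcal{VR}(\mathcal{F}_1^m\cup\mathcal{F}_3^m,2)\simeq\bigvee_{{m\choose 4}}S^3$. Iterating the recursion from $(n,m)$ down to $(1,m-n+1)$ and adding the base count yields $\sum_{k=2}^n o_{m+k-n,k}+{m+1-n\choose 4}$ copies of $S^3$. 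The step I expect to be the main obstacle is the star-cluster analysis of $K'$: with two set-sizes present, verifying the hypothesis of Lemma~\ref{SC_homo} and the identity $\text{SC}_{K'}(L)=K'$ requires a more delicate case analysis than the single-size argument of Theorem~\ref{homo_equ_n}, in particular the observation that the otherwise-obstructing $L$-type simplices of $n$-subsets are coned off precisely by the $(n+2)$-subsets containing $1$.
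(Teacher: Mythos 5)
Your proposal is correct and follows essentially the same route as the paper: identify the subcomplex on all $n$-subsets together with the $(n+2)$-subsets containing $1$, collapse it onto $\mathcal{VR}(\mathcal{F}^{m-1}_{n-1}\cup\mathcal{F}^{m-1}_{n+1},2)$ via the star-cluster criterion of Lemma~\ref{SC_homo}, and then adjoin the $(n+2)$-subsets with minimum $\geq 2$ one at a time in the order $\prec$, using Lemma~\ref{lk_wide_2} and Lemma~\ref{complex_add_1v} to peel off $\bigvee_{s_A}S^3$ at each step. The only divergence is the base case $n=1$, where the paper instead glues on the ${m\choose 4}$ maximal simplices $L[i_1,i_2,i_3,i_4]$ against the star cluster of the simplex $\mathcal{VR}(\mathcal{F}_1^m,2)$, each contributing its $2$-skeleton $\simeq S^2$ as the intersection; your direct link computation $\bigvee_{i_1-1}S^2$ together with the identity $\sum(i_1-1)={m\choose 4}$ gives the same answer.
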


\begin{proof} We firstly prove that $K=\mathcal{VR}(\mathcal{F}_1^{m}\cup \mathcal{F}_{3}^m, 2)\simeq \bigvee_{m\choose 4} S^3$. Let $L_0= \mathcal{VR}(\mathcal{F}_1^{m}, 2)$ which is a complex generated by a simplex because each pair of singlton subsets of $[m]$ has distance $2$. Hence by Lemma~\ref{SC_homo}, $\text{SC}_K(L_0)$ is contractible. By Lemma~\ref{max_simp}, there are two types of maximal simplices in $\mathcal{VR}( \mathcal{F}_{3}^m, 2)$, namely $N[i_1, i_2]$ and $L[i_1, i_2, i_3, i_4]$ for some $i_1, \ldots, i_4\in [m]$; clearly $\{i_1\}\cup N[i_1, i_2]\}$ is a simplex in $K$. Hence $N[i_1, i_2]\in \text{SC}_K(L_0)$ for each $i_1, i_2\in[m]$. Within $\mathcal{VR}( \mathcal{F}_{3}^m, 2)$, there are ${m\choose 4}$-many simplices in the form $L[i_1, i_2, i_3, i_4]$ and  the intersection of each pair of such simplices contains at most one vertex. We list such simplices as $\{\sigma_\ell: \ell =1, 2, \ldots, {m\choose 4}\}$ and define $L_\ell=\text{SC}_K(L_0)\cup \bigcup_{i=1}^\ell\sigma_\ell$. We see that $\sigma_\ell\notin \text{SC}_K(L_0)$ for each $\ell =1, 2, \ldots, {m\choose 4}$  because otherwise there is a number in $\cap\sigma_\ell$ which is a contradiction; and because each of $\sigma_\ell$'s proper faces has an nonempty intersection, we get that $\sigma_\ell^{(2)}\subset \text{SC}_K(L_0)$. Hence $L_{\ell-1}\cap\sigma_
\ell=\sigma_\ell^{(2)}\simeq S^2$. Therefore, by Lemma~\ref{cup_simp}, $L_1\simeq S^3$ and inductively $L_\ell\simeq \bigvee_{\ell}S^3$. This finishes the proof of first part.

Now we assume that $n\geq 2$ and $\mathcal{VR}(\mathcal{F}^{m-1}_{n-1}\cup \mathcal{F}^{m-1}_{n+1}, 2)$  is homotopy equivalent to a wedge sum of $S^3$'s. Let $\mathcal{G}_0=\{B\in\mathcal{F}_{n}^m\cup \mathcal{F}_{n+2}^m: 1\in B\}$ and $K_0=\mathcal{VR}(\mathcal{G}_0, 2)$; by a straightforward isometric mapping, we see that $K_0\cong\mathcal{VR}(\mathcal{F}_{n-1}^{m-1}\cup \mathcal{F}_{n+1}^{m-1}, 2)$ which is  homotopy equivalent to a wedge sum of $S^3$'s by the assumption.  Let $\mathcal{G}_1=\{B\in\mathcal{F}_{n}^m\cup \mathcal{F}_{n+2}^m:|B|=n \text{ or } 1\in B\}$ and $K_1=\mathcal{VR}(\mathcal{G}_1, 2)$.

Next, we show that $K_1=\text{SC}_{K_1}(K_0)\simeq K_0$. Let $\sigma$ be a simplex in $K_1$ consisting of vertices not containing $1$.  Then $\sigma$ is  a face of either $N[i_1, i_2, \ldots, i_{n-1}]$ or $L[i_1, i_2, \ldots, i_{n+1}]$ with all the numbers $>1$. Since $1i_1i_1\cdots i_{n-1}\in N[i_1, i_2, \ldots, i_{n-1}]$, $N[i_1, i_2, \ldots, i_{n-1}]\in \text{SC}_{K_1}(K_0)$. Also notice that $\{1 i_1 i_2\cdots i_{n+1}\}\cup L[i_1, i_2, \ldots, i_{n+1}]$ is a simplex in $K_1$; hence $ L[i_1, i_2, \ldots, i_{n+1}]\in \text{SC}_{K_1}(K_0)$. Therefore, $K_1=\text{SC}_{K_1}(K_0)$.

Next we show that the condition in Lemma~\ref{SC_homo} is satisfied which implies that $\text{SC}_{K_1}(K_0)\simeq K_0$. Pick a vertex  $B=i_1i_2\cdots i_n$ in $\mathcal{F}_{n}^m$ not containing $1$ such that $B\in \text{st}_{K_1}(D_1)\cap \text{st}_{K_1}(D_2)$ with $D_1, D_2\in \mathcal{G}_0$. There are three cases to discuss.
\begin{itemize}
\item[Case 1:] Suppose $|D_1|=|D_2|=n+2$. Then by Lemma~\ref{subset_dist2}, $B\subset D_1$ and $B\subset D_2$. Since both $D_1$ and $D_2$ contain $1$, $|D_1\cap D_2|=n+1$ and therefore $\{D_1, D_2\} \in K_0$.

\item[Case 2:] Suppose $|D_1|=n$ and $|D_2|=n+2$. Then $D_1$ contains an $(n-1)$-subset of $B$ and $1$; hence $D_1\subset D_2$. By Lemma~\ref{subset_dist2}, $d(D_1, D_2)=2$ and therefore $\{D_1, D_2\} \in K_0$.

\item[Case 3:] Suppose $|D_1|=|D_2|=n$. Then both $D_1$ and $D_2$ contains an $(n-1)$-subset of $B$ and $1$ and hence $|D_1\cap D_2|=n-1$, i.e.,  $d(D_1, D_2)=2$. Therefore $\{D_1, D_2\} \in K_0$.
\end{itemize}

Now fix $A\in \mathcal{F}^m_{n+2}$ with $\min A\geq 2$ and assume for induction that $\mathcal{VR}(\{B\in \mathcal{F}^m_{n}\cup \mathcal{F}^m_{n+2}: B\prec A \}, 2)$ is homotopy equivalent to
$$\mathcal{VR}(\mathcal{F}^{m-1}_{n-1}\cup \mathcal{F}^{m-1}_{n+1}, 2)\vee \bigvee_{\sum_{B\in\mathcal{F}^m_{n+2} \text{ with }\min B\geq 2 \text{ and } B\prec A} s_B}S^3$$
which is a wedge sum of $S^3$'s. If $A=\min_{\prec} \{C: C\in \mathcal{F}^m_{n+2}\text{ and }\min C=2\}$, then the set $\{B: B\in\mathcal{F}^m_{n+2} \text{ with }\min B\geq 2 \text{ and } B\prec A\}$ is empty; so the inductive assumption holds since $\mathcal{VR}(\{B\in \mathcal{F}^m_{n}\cup \mathcal{F}^m_{n+}: B\prec A \}, 2)\simeq \mathcal{VR}(\mathcal{F}^{m-1}_{n-1}\cup \mathcal{F}^{m-1}_{n+1}, 2)$ by the discussion above.

Let $L=\mathcal{VR}(\{B\in \mathcal{F}^m_{n+2}\cup \mathcal{F}^m_{n}: B\preceq A \}, 2)$. Then by Lemma~\ref{lk_wide_2}, $\text{lk}_L(A)$ is homotopy equivalent to $\bigvee_{s_A}S^2$ which is clearly contractible in $L\setminus\{A\}$. Hence by Lemma~\ref{complex_add_1v}, $L$ is homotopy equivalent to
$$\mathcal{VR}(\mathcal{F}^{m-1}_{n+1}\cup \mathcal{F}^{m-1}_{n+1}, 2)\vee \bigvee_{\sum_{B\in\mathcal{F}^m_{n+2} \text{ with }\min B\geq 2 \text{ and } B\prec A} s_B}S^3 \vee \Sigma (\bigvee_{s_A}S^2),$$
i.e.
$$\mathcal{VR}(\mathcal{F}^{m-1}_{n+1}\cup \mathcal{F}^{m-1}_{n+1}, 2)\vee \bigvee_{\sum_{B\in\mathcal{F}^m_{n+2} \text{ with }\min B\geq 2 \text{ and } B\preceq A} s_B}S^3.$$

This finishes the proof. \end{proof}

We conclude this section by showing that the vertices $\mathcal{F}_{n+1}^m$ in the complex $\mathcal{VR}(\mathcal{F}_n^{m}\cup\mathcal{F}_{n+1}^{m}\cup \mathcal{F}_{n+2}^m, 2)$ don't contribute to its homotopy type which means that it is homotopy equivalent to $\mathcal{VR}(\mathcal{F}_n^{m}\cup \mathcal{F}_{n+2}^m, 2)$.

\begin{theorem} Suppose that $1\leq n<m-3$ with $m\geq 4$. Then, $$\mathcal{VR}(\mathcal{F}_n^{m}\cup\mathcal{F}_{n+1}^{m}\cup \mathcal{F}_{n+2}^m, 2)\simeq\mathcal{VR}(\mathcal{F}_n^{m}\cup \mathcal{F}_{n+2}^m, 2).$$
\end{theorem}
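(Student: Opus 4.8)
The statement asserts that adding the intermediate layer $\mathcal{F}_{n+1}^m$ to the complex $\mathcal{VR}(\mathcal{F}_n^m \cup \mathcal{F}_{n+2}^m, 2)$ does not change its homotopy type. The natural strategy is to show that the vertices in $\mathcal{F}_{n+1}^m$ can be absorbed via star clustering without altering the homotopy type, mirroring the mechanism used in Theorem~\ref{homo_equ_n} and Theorem~\ref{homot_wide_2_2} where an entire layer of vertices contributes nothing. Let me set up the notation and outline the plan.

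**The plan.** Write $K = \mathcal{VR}(\mathcal{F}_n^m\cup\mathcal{F}_{n+1}^m\cup\mathcal{F}_{n+2}^m, 2)$ and $K_0 = \mathcal{VR}(\mathcal{F}_n^m\cup\mathcal{F}_{n+2}^m, 2)$, so that $K_0$ is the full subcomplex of $K$ on the vertex set $\mathcal{F}_n^m\cup\mathcal{F}_{n+2}^m$. First I would verify that $K_0$ is full in $K$: any simplex of $K$ all of whose vertices avoid $\mathcal{F}_{n+1}^m$ already lies in $K_0$ since $K$ is a clique complex and the distances among $\mathcal{F}_n^m\cup\mathcal{F}_{n+2}^m$ are inherited. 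Then by Lemma~\ref{SC_complex}, applied with $K_1 = \mathcal{VR}(\mathcal{F}_{n+1}^m, 2)$ and $K_2 = K_0$, we get $K = \text{SC}_K(K_1)\cup K_0$, so in fact $K = \text{SC}_K(\mathcal{F}_{n+1}^m)\cup K_0$. The goal reduces to showing that this union is homotopy equivalent to $K_0$, which I would achieve by checking the hypothesis of Lemma~\ref{SC_homo} for the subcomplex $K_0$ inside $K$.

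**The key verification.** The heart of the argument is to show that for any pair $C, C'\in (\mathcal{F}_n^m\cup\mathcal{F}_{n+2}^m)$ with $(\text{st}_K(C)\cap\text{st}_K(C'))\setminus K_0 \neq \emptyset$, the edge $\{C, C'\}$ already lies in $K_0$, i.e.\ $d(C,C')\leq 2$. A witness $B$ in that intersection but outside $K_0$ must be a vertex of $\mathcal{F}_{n+1}^m$ (since $K_0$ is full), with $d(B,C)\leq 2$ and $d(B,C')\leq 2$. I would split into cases by the sizes of $C,C'$. If $|C|=|C'|=n$, then $|B|=n+1 > n$, so by Lemma~\ref{subset_dist2} both $C\subset B$ and $C'\subset B$ with $|B\setminus C|,|B\setminus C'|\leq 1$; hence $|C\cap C'|\geq n-1$ forces $d(C,C')\leq 2$. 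If $|C|=|C'|=n+2$, then $|B|=n+1<n+2$, so Lemma~\ref{subset_dist2} gives $B\subset C$ and $B\subset C'$ with $|C\setminus B|,|C'\setminus B|\leq 1$, again yielding $d(C,C')\leq 2$. The mixed case $|C|=n$, $|C'|=n+2$ requires $C\subset B\subset C'$ up to the size-one discrepancies, so $C\subset C'$ with $|C'\setminus C|\leq 2$, whence $d(C,C')\leq 2$ by Lemma~\ref{subset_dist2}. In every case the edge is in $K_0$, so the hypothesis of Lemma~\ref{SC_homo} holds.

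**Conclusion and the main obstacle.** By part~(ii) of Lemma~\ref{SC_homo}, since $\text{SC}_K(\mathcal{F}_{n+1}^m)$ is built from $K_0$ by adjoining the stars of the finitely many vertices of $\mathcal{F}_{n+1}^m$, we conclude $K = \text{SC}_K(\mathcal{F}_{n+1}^m)\cup K_0 \simeq K_0$, which is exactly the claim. The main obstacle is the careful case analysis in the key verification: one must be sure the witness $B$ genuinely forces the strong containment relations between $C$ and $C'$, and in particular that the size constraints from Lemma~\ref{subset_dist2} leave no room for $d(C,C')\geq 3$. A subtlety worth double-checking is whether the vertices of $\mathcal{F}_{n+1}^m$ themselves might form edges or higher simplices with each other that escape $K_0$ yet are not captured by a single star; but since Lemma~\ref{SC_homo}(ii) only requires adjoining individual stars and the full-subcomplex property of $K_0$ handles the remaining simplices, this is automatically resolved once the pairwise condition is verified.
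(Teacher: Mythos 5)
Your pairwise verification is correct and is essentially what the paper does (the three cases via Lemma~\ref{subset_dist2} are exactly what the paper invokes by pointing back to the argument of Theorem~\ref{homo_2}), but your final step contains a genuine gap: you apply part~ii) of Lemma~\ref{SC_homo} to the stars of the vertices of $\mathcal{F}_{n+1}^m$, whereas that lemma only permits adjoining stars of vertices that lie \emph{in} the subcomplex $L=K_0$. The vertices of $\mathcal{F}_{n+1}^m$ are not in $K_0$, so the conclusion $\text{SC}_K(\mathcal{F}_{n+1}^m)\cup K_0\simeq K_0$ does not follow from what you verified. What your verification actually yields is $\text{SC}_K(K_0)\simeq K_0$, i.e.\ adjoining the stars of the $K_0$-vertices preserves the homotopy type; it says nothing about simplices of $K$ spanned entirely by vertices of $\mathcal{F}_{n+1}^m$. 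That is precisely the ``subtlety'' you flag at the end and then dismiss as automatic --- it is not: such a simplex lies in $\text{SC}_K(K_1)$ by definition, but you still owe an argument that it lies in the star of some vertex of $K_0$, and the pairwise condition on pairs from $K_0$ gives no information about it.

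The missing ingredient is the paper's use of Lemma~\ref{max_simp} to prove the identity $K=\text{SC}_K(K_0)$. Any simplex $\sigma$ of $K$ all of whose vertices are $(n+1)$-sets is a face of a maximal simplex of $\mathcal{VR}(\mathcal{F}_{n+1}^m,2)$, which by Lemma~\ref{max_simp} has the form $N[i_1,\ldots,i_n]$ or $L[i_1,\ldots,i_{n+2}]$; in the first case the $n$-set $i_1i_2\cdots i_n\in\mathcal{F}_n^m$ is at distance $1$ from every vertex of $N[i_1,\ldots,i_n]$, and in the second the $(n+2)$-set $i_1i_2\cdots i_{n+2}\in\mathcal{F}_{n+2}^m$ is at distance $1$ from every vertex of $L[i_1,\ldots,i_{n+2}]$, so in either case $\sigma$ lies in the star of a vertex of $K_0$. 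This gives $K=\text{SC}_K(K_0)$, and your (correct) verification of the hypothesis of Lemma~\ref{SC_homo} then yields $K=\text{SC}_K(K_0)\simeq K_0$. Once you replace the decomposition $K=\text{SC}_K(K_1)\cup K_0$ by this identity, the rest of your argument goes through.
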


\begin{proof} Let $K=\mathcal{VR}(\mathcal{F}_n^{m}\cup\mathcal{F}_{n+1}^{m}\cup \mathcal{F}_{n+2}^m, 2)$ and $K_0=\mathcal{VR}(\mathcal{F}_n^{m}\cup \mathcal{F}_{n+2}^m, 2)$. Then we claim that $K=\text{SC}_{K}(K_0)$ and $\text{SC}_{K}(K_0)\simeq K_0$.

It is clear that $\text{SC}_{K}(K_0)\subseteq K$. Take a simplex $\sigma$ in $K$ such that none of its vertices is in $K_0$; hence all its vertices are in $\mathcal{F}_{n+1}^{m}$. By Lemma~\ref{max_simp}, $\sigma$ is a face of either $N[i_1, i_2, \ldots, i_{n}]$ or $L[i_1, i_2, \ldots, i_{n+2}]$. Note that $\{i_1 i_2 \cdots i_{n}\}\cup N[i_1, i_2, \ldots, i_{n}]$ is a simplex in $K$ with $i_1 i_2 \cdots i_{n}\in K_0$; therefore $N[i_1, i_2, \ldots, i_{n}]\in \text{SC}_K(K_0)$. Also  $\{i_1i_2\cdots i_{n+2}\}\cup L[i_1, i_2, \ldots, i_{n+2}]$ is a simplex in $K$ with $i_1i_2\cdots i_{n+2}\in K_0$; hence, $L[i_1, i_2, \ldots, i_{n+2}] \in \text{SC}_K(K_0)$. Therefore,  $\text{SC}_{K}(K_0)= K$.

Take $D\in \mathcal{F}^m_{n+1}$ with $D\in \text{st}_K(B_1)\cap \text{st}_K(B_2)$ where $B_1, B_2$ are vertices in $K_0$. Using a similar discussion as in the proof of Theorem~\ref{homo_2}, $\{B_1, B_2\}\in K_0$. Hence the condition of Lemma~\ref{SC_homo} is satisfied which implies that  $\text{SC}_{K}(K_0)\simeq K_0$.

Therefore, we conclude that $K\simeq K_0$. \end{proof}

\section{Open Questions}

There is little known about the Vietoris-Rips complexes of these finite metric spaces with large scales. A good number of interesting open questions about the Vietoris-Rips complex on hypercube groups with large scales have  been raised in \cite{AA22, Shu22}. We'll end our paper with a couple questions related to the independence complex of Kneser graphs.

Suppose $2<n<m-2$. For any pair of subsets $B_1, B_2$ of $[m]$ with $|B_1|=|B_2|=n$, $d(B_1, B_2)\leq 2k+1$ is equivalent to $d(B_1, B_2)\leq 2k$ for any nonnegative integer $k$. Hence the Vietoris-Rips complex $\mathcal{VR}(\mathcal{F}_n^m, 3)$ is identical with $\mathcal{VR}(\mathcal{F}_n^m, 2)$. Little is known for larger scale $r\geq 4$. The complex $\mathcal{VR}(\mathcal{F}_3^6, 4)$ is  the boundary of a cross-polytope on $20$ vertices, hence it is homotopy equivalent to $S^9$. Using polymake \cite{Poly10}, we find the reduced homology groups of $\mathcal{VR}(\mathcal{F}_3^7, 4)$ is trivial when $n\neq 6$ or $9$; also,  $\tilde{H}_6( \mathcal{VR}(\mathcal{F}_3^7, 4)) =\mathbb{Z}^{29}$ and $\tilde{H}_9 (\mathcal{VR}(\mathcal{F}_3^7, 4)) =\mathbb{Z}^7$. This is related to independence complex of the Kneser graphs. Notice that the complex $\mathcal{VR}(\mathcal{F}_3^m, 4)$ is identical with $\mathcal{VR}(\mathcal{F}_3^m, 5)$; therefore both of them are equal to the independence complex of the Kneser graph $\text{KG}_{3, m-6}$ with $m\geq 6$.  Then the complex $\mathcal{VR}(\mathcal{F}_n^m, 4)$ for general $2n<m$ is very likely to be homotopy equivalent to a wedge sum of spheres with different dimensions.

Then, we have the following question.

\begin{ques}Assume that $2n<m$. Are the complexes $\mathcal{VR}(\mathcal{F}_n^m, 4)$ with $2n<m$ homotopy equivalent to a wedge sum of spheres $S^6$'s and $S^9$'s?
\end{ques}

In general, it is worth to investigate the following question.

\begin{ques} What are the homotopy types of the complex $\mathcal{VR}(\mathcal{F}^m_n, r)$ for $r\geq 4$?

\end{ques}

\textbf{Acknowledgements} The authors are grateful to Professor Henry Adams and the anonymous referees for their valuable comments and suggestions which lead to the improvements of the paper.

\end{document}